\newcommand\note[1]{\marginpar{\small #1}} 
\newcommand\exampleEnd{\Diamond}
\newcommand\R{\mathbb{R}}               
\newcommand\Z{\mathbb{Z}}               
\newcommand\N{\mathbb{N}}               
\newcommand\Zk{\mathbb{Z}_k}            
\renewcommand\l{\ell}                   
\newcommand\Zl{\mathbb{Z}_{\l}}         
\newcommand\mTen{\theta}               
\newcommand\mFlow{\phi}                 
\newcommand\rmFlow{\#\rmFlowSet}            
\newcommand\iFlow{\overline{\phi}}      
\newcommand{\rmFlowSet}{\mathsf{F}} 
\newcommand\fb{\overline{f}}            
\newcommand\s{\sigma}                   
\newcommand\Tc{T^c}                     
\newcommand\oA{C}                       
\newcommand\A{A}                        
\newcommand\Grp{\mathcal{A}}            
\renewcommand\H{\mathcal{H}}              
\newcommand\B{\mathcal{B}}              
\renewcommand\P[1]{P_G(#1)}
\newcommand\Po[1]{P^\circ_G(#1)}
\newcommand\Pb{\P{b}}                  
\newcommand\Pbo{\Po{b}}           
\newcommand\bull{{\scriptscriptstyle\bullet}}
\newcommand\0{0}                        
\newcommand\rar{\rightarrow}
\newcommand\tcf{\#{\tcs}}
\newcommand\tcs{\mathsf{T}}
\newcommand\sprod[2]{\langle #1 , #2 \rangle}
\newcommand\defn[1]{\emph{#1}}
\newcommand\ehr{\mathsf{Ehr}}
\newcommand\supp{\mathsf{supp}}
\newcommand\relint{\mathsf{relint}}
\renewcommand\int{\mathsf{int}}
\newcommand\vol{\mathsf{vol}}
\newcommand\rk{\mathsf{rank}\,}
\renewcommand\dim{\mathsf{dim}\,}
\newcommand\1{\mathbbm{1}}
\newtheorem{thm}{Theorem}[section]
\newtheorem{cor}[thm]{Corollary}
\newtheorem{lem}[thm]{Lemma}
\newtheorem{prop}[thm]{Proposition}
\theoremstyle{definition}
\newtheorem{dfn}[thm]{Definition}
\newtheorem*{Ex1}{Example 1}
\newtheorem*{Ex2}{Example 2}
\title{Ehrhart theory, Modular flow reciprocity, and the Tutte polynomial}
\author{Felix Breuer}
\address{Felix Breuer, Freie Universit\"at Berlin, Institut f\"ur Mathematik,
Arnimallee 3, 14195 Berlin, Germany}
\email{felix.breuer@fu-berlin.de}
\author{Raman Sanyal}
\address{Raman Sanyal, Department of Mathematics, UC Berkeley, 970 Evans Hall,
Berkeley, CA 94720}
\email{sanyal@math.berkeley.edu}
\thanks{Felix Breuer was supported by the Deutsche Forschungsgemeinschaft
within the research training group `Methods for Discrete Structures' (GRK
1408). Raman Sanyal was supported by the Konrad-Zuse-Zentrum f\"ur
Informationstechnik Berlin and by a Miller Research Fellowship.}
\keywords{modular flow polynomial, reciprocity, Ehrhart theory, totally
cyclic orientations, Tutte polynomial}
\date{\today}
\begin{document}

\begin{abstract}
    Given an oriented graph $G$, the modular flow polynomial $\mFlow_G(m)$
    counts the number of nowhere-zero $\Z_k$-flows of $G$. We give a
    description of the modular flow polynomial in terms of (open) Ehrhart
    polynomials of lattice polytopes. Using Ehrhart-Macdonald reciprocity we
    give a combinatorial interpretation for the values of
    $\mFlow_G$ at negative arguments which answers a question of Beck and
    Zaslavsky (2006).  Our construction extends to $\Zl$-tensions and we
    recover Stanley's reciprocity theorem for the chromatic polynomial.
    Combining the combinatorial reciprocity statements for flows and tensions,
    we give an enumerative interpretation for positive evaluations of the
    Tutte polynomial $t_G(x,y)$ of $G$.
\end{abstract}

\maketitle

\section{Introduction}

The chromatic polynomial of a graph is probably the most famous graph
polynomial. In 1973 Stanley \cite{stanley73} gave an ``unorthodox''
interpretation of graph colorings in terms of acyclic orientations and
compatible maps. The benefit of this interpretation is a natural,
combinatorial interpretation of (suitably normalized) evaluations of the
chromatic polynomial at a negative argument. In some sense this was one of the
first \emph{combinatorial reciprocity theorems} \cite{stanley74}. In 2006,
Beck and Zaslavsky \cite{BZ06} gave a different perspective on this result by
casting it into the realms of geometry. They identified graph colorings as
lattice points ``inside'' a polytope but ``outside'' a hyperplane arrangement
--- an object answering to the name of \emph{inside-out polytope}. Thus, the
chromatic polynomial can be understood as a sum of Ehrhart functions and a
suitably generalized Ehrhart-Macdonald reciprocity yields the combinatorial
interpretation. We explain more of the details in the sections to come.

An equally important polynomial invariant of a graph is given by the 
\emph{modular flow polynomial}. Let $G = (V,E)$ be an \emph{oriented graph}
and let $\Grp$ be an abelian group. An \emph{$\Grp$-flow} is an assignment $f :
E \rightarrow \Grp$ such that at every vertex we have a conservation of flow,
i.e.~
\[
    \sum_{uv \in E} f_{uv} - \sum_{vu \in E} f_{vu} = 0
\]
for every $v \in V$. The \emph{support} of the flow $f$ is $\supp(f) = \{ e
\in E : f_e \not= 0 \}$ and $f$ is called \emph{nowhere-zero} if $\supp(f) =
E$.  Tutte \cite{tutte47} was the first to consider nowhere-zero flows for a
fixed group $\Grp$.  He proved that the number of nowhere-zero $\Grp$-flows
only depends on the order of the group and that $\mFlow_G(k)$, the number of
nowhere-zero $\Zk$-flows, is a polynomial in $k$.  Clearly, this is only
meaningful for finite groups, but in the case of $\Z$-flows a natural
concept is that of a \emph{$k$-flow} which is a $\Z$-flow with values strictly
smaller than $k$ in absolute values. Tutte \cite{tutte54} proved that there is
a nowhere-zero $\Z_k$-flow if and only if there is a nowhere-zero $k$-flow.
However, the number of nowhere-zero $k$-flows and $\Z_k$-flows differ in
general.

In 2002 Kochol \cite{kochol02} proved that $\iFlow_G(k)$, the number of
nowhere-zero $k$-flows, is also a polynomial and in \cite{BZ06-2} Beck and
Zaslavsky showed that this is yet another incarnation of Ehrhart theory of
inside-out polytopes. Moreover, this approach yields a reciprocity statement
that parallels that for the chromatic polynomial: $(-1)^{\xi(G)}\iFlow_G(-k)$
counts pairs of $k$-flows and compatible totally cyclic orientations. This
raised the question for a combinatorial reciprocity theorem of the modular
flow polynomial (cf.~\cite[Problem 3.2]{BZ06-2}).

As an answer to this question, the first result of this paper gives an
interpretation of $(-1)^{\xi(G)}\mFlow_G(-k)$ as naturally counting pairs of
$\Zk$-flows and totally cyclic reorientations on certain subgraphs. We give
the precise statement in Section~\ref{sec:Flows}. A little surprisingly, our
proof is somewhat simpler than the one for $k$-flows in \cite{BZ06-2}. For
starters, we do not need the theory of inside-out polytopes \emph{per se}; in
Section \ref{sec:FlowsInsideOut} we relate our proof to inside-out polytopes
which sheds ``geometric light'' on some well-known properties of flow
polynomials. In Section~\ref{sec:Tensions}, we discuss colorings and their
relations to $\Z_k$-tensions. We sketch how analogous arguments yield a
reciprocity statement for $\Z_k$-tensions that corresponds to Stanley's
reciprocity for
colorings \cite{stanley73}.  In Section~\ref{sec:TutteInterpretation} we make
use of the reciprocity statements to prove a enumerative interpretation for
arbitrary evaluations of Tutte polynomials of graphs at positive arguments,
which is implicit in the work of Reiner~\cite{reiner99}.  In the appendices we
give traditional, that is deletion-contraction, proofs for the main results of
Section~\ref{sec:Flows} and \ref{sec:TutteInterpretation}.

{\bf Acknowledgments.} We would like to thank Matthias Beck for valuable
conversations and comments on an earlier version of this paper.

\section{Modular flow reciprocity}
\label{sec:Flows}

Let $G = (V,E)$ be an \emph{oriented graph}, that is, an unoriented graph
equipped with an orientation of its edges.  We allow, even encourage, $G$ to
have multiple edges and loops. For an $S \subseteq E$ we denote by
$G_{\setminus S}$, $G_{/ S},$ and $G[S]$ the result of \emph{deleting},
\emph{contracting}, and \emph{restricting} to $S$, respectively.  Moreover, we
denote by $_SG$ the \emph{reorientation} of $G$ along $S$, i.e.~the graph
obtained by reversing the orientation of the edges in $S$. We denote by $c(G)$
the number of (weakly) connected components and we call $e \in E$ a
\emph{coloop} or \emph{bridge} if $c(G_{\setminus e}) = c(G) + 1$.  Finally,
we denote by $\xi(G) := |E| - |V| + c(G)$ the \emph{cyclotomic number} of $G$.

Let us give a precise definition for the main character.
\begin{dfn} For an oriented graph $G = (V,E)$,  the \emph{modular flow
    polynomial} $\mFlow_G$ of $G$ is the function
    \[
        \mFlow_G(k) = \# \left\{ f : E \rightarrow \Zk : f \text{ nowhere-zero
        $\Zk$-flow} \right\}.
    \]
\end{dfn}

The name was justified by Tutte \cite{tutte54} who showed that $\mFlow_G$ is
indeed a polynomial of degree $\xi(G)$.  In particular, $\mFlow_G$ can be
extended to negative arguments. In order to state our main result of this
section we need the notion of a totally cyclic orientation. An oriented graph
is called \emph{totally cyclic} if every edge is contained in a directed cycle
and $\s \subseteq E$ is a \emph{totally cyclic reorientation} if $_\s G$ is
totally cyclic.

\begin{thm}[Modular flow reciprocity]\label{thm:MainFlow}
    Let $G = (V,E)$ be an oriented graph and let $k$ be a positive integer.
    Then $(-1)^{\xi(G)}\mFlow_G(-k)$ counts pairs $(f,\s)$ where $f$ is a
    $\Zk$-flow and $\s \subseteq E \setminus \supp(f)$ is a totally cyclic
    reorientation for $G_{/ \supp(f)}$.
\end{thm}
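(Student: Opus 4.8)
The plan is to realize $\mFlow_G$ as a sum of Ehrhart-type counting functions and then apply Ehrhart--Macdonald reciprocity term by term. First I would recall the standard polyhedral model for flows: the set of real-valued flows forms the \emph{flow space} $\mathcal{F}_G \subseteq \R^E$, a rational linear subspace of dimension $\xi(G)$ cut out by the conservation equations. Inside $\mathcal{F}_G$ sits the \emph{flow polytope}, say $\Phi_G = \mathcal{F}_G \cap [-1,1]^E$, a rational polytope whose lattice points (with respect to the lattice $\mathcal{F}_G \cap \Z^E$) at dilation factor $k$ are precisely the $\Z$-flows with entries in $\{-k,\dots,k\}$. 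The key combinatorial point, going back to Tutte, is that reduction mod $k$ gives a bijection between $\Z_k$-flows and $\Z$-flows supported in a fundamental domain; more usefully for us, one counts nowhere-zero $\Z_k$-flows by an inclusion–exclusion / stratification over which coordinates vanish. So I would write
\[
    \mFlow_G(k) \;=\; \sum_{S \subseteq E} (-1)^{|E \setminus S|}\, \#\bigl(k\,\Phi_{G} \cap \Z^E\bigr)\big|_{\supp \subseteq S}
\]
or, better, stratify directly: the nowhere-zero $\Z_k$-flows are the lattice points in the \emph{relative interior} of the appropriate faces, so that $\mFlow_G(k) = \sum_F \ehr_{\relint F}(k)$ summed over the relevant open faces $F$ of $\Phi_G$ (one for each ``pattern'' of which coordinates sit at $\pm$ something versus which are free), each contributing an open Ehrhart polynomial. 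The cleanest route is to identify nowhere-zero $\Z_k$-flows with lattice points in the half-open polytope obtained from $\Phi_G$ by removing the facets $\{x_e = 0\}$, i.e.\ an open Ehrhart count $\Po{G}(k)$, possibly after a change of variables that turns ``$f_e \in \Z_k \setminus 0$'' into ``$x_e \in (0,1)$ scaled by $k$''.

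Second, I would apply Ehrhart--Macdonald reciprocity to each piece. For a lattice polytope $P$ of dimension $d$, $\ehr_{\relint P}(k) = (-1)^d \ehr_P(-k)$, and more generally for a half-open polytope the open Ehrhart polynomial evaluated at $-k$ returns, up to sign $(-1)^d$, the Ehrhart count of the ``complementary'' half-open polytope where exactly the \emph{other} facets are included. Applying this to the flow polytope: the facets of $\Phi_G$ come in two flavors — the coordinate hyperplanes $\{x_e = 0\}$ (which we removed to impose nowhere-zero) and the box facets $\{x_e = \pm 1\}$. Reciprocity swaps the roles: $(-1)^{\xi(G)}\mFlow_G(-k)$ should count lattice points at dilation $k$ that lie in the \emph{closure} with respect to the coordinate facets but in the \emph{relative interior} with respect to the box facets. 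Unwinding this: for each edge $e$ we now allow $f_e = 0$ (that is the ``closed along $\{x_e=0\}$'' part), and the edges with $f_e \neq 0$ are forced to be ``strictly inside the box'', which combinatorially is exactly the condition that those coordinates can be prescribed freely as a flow in a bounded region — i.e.\ a $\Z_k$-flow on the subgraph $G[\supp(f)]$ — while the zero-edges must be arranged so that the remaining coordinates genuinely parametrize that face, which is the totally cyclic condition.

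Third, and this is the combinatorial heart of the matter, I would need to identify exactly which faces of $\Phi_G$ appear and translate the geometric ``relative interior of a box face'' condition into the graph-theoretic statement about totally cyclic reorientations. The faces of the flow polytope have a known description in terms of the orientations: a face is obtained by fixing some set $S$ of coordinates to $\pm 1$, and that face is nonempty (of the expected dimension) precisely when, after reorienting so those edges all point ``forward'', the graph $_\s G_{/(\text{zero edges})}$ is totally cyclic — this is the classical correspondence between vertices/faces of the flow polytope and totally cyclic orientations. So after reciprocity, the edges $e \notin \supp(f)$ with $f$ the flow-part are exactly the edges contracted, and the sign/choice data $\s \subseteq E \setminus \supp(f)$ recording which of those edges sit at $+1$ versus $-1$ is precisely a totally cyclic reorientation of $G_{/\supp(f)}$. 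I expect the main obstacle to be precisely this bookkeeping: getting the half-open decomposition of $\Phi_G$ right so that each nowhere-zero $\Z_k$-flow is counted exactly once (the fundamental-domain subtlety distinguishing $\Z_k$-flows from $k$-flows — this is where the proof genuinely differs from \cite{BZ06-2}), and then checking that the facets surviving reciprocity match the totally cyclic reorientation condition on the contracted graph, including the degenerate cases of loops and bridges. Everything else — polynomiality, the degree $\xi(G)$, and the sign $(-1)^{\xi(G)}$ — falls out of Ehrhart--Macdonald reciprocity applied to the $\xi(G)$-dimensional flow space once the correct half-open polytope is in hand.
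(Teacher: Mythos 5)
There is a genuine gap, and it sits at the very first step of your construction. You place the flow polytope inside the flow \emph{space} $\ker A_G$, taking $\Phi_G = \ker A_G \cap [-1,1]^E$, whose lattice points at dilation $k$ are integral flows with entries in $\{-k,\dots,k\}$ --- that is the polytope for \emph{$k$-flows}, i.e.\ the Beck--Zaslavsky setting of \cite{BZ06-2}. But a nowhere-zero $\Zk$-flow, after lifting its values to representatives in $\{1,\dots,k-1\}$, satisfies $Af = k\cdot b$ for some integer vector $b$ that is in general \emph{nonzero}; it lives on an affine translate of $\ker A_G$, not on $\ker A_G$ itself. Your claimed ``bijection between $\Z_k$-flows and $\Z$-flows supported in a fundamental domain'' is false --- the two counts genuinely differ (this is exactly why $\mFlow_G$ and $\iFlow_G$ are different polynomials). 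The correct model is a finite disjoint family of open lattice polytopes, one slice $\Po{b} = \{p : Ap=b,\ 0<p_e<1\}$ of the unit cube for each feasible $b \in \B_G$, with $\mFlow_G(k) = \sum_{b}\ehr(\Po{b};k)$; total unimodularity of $A_G$ is what guarantees each closed slice is a lattice polytope so that Ehrhart--Macdonald reciprocity applies term by term. You flag the $\Zk$-versus-$k$-flow distinction as ``the main obstacle'' but never resolve it, and with the single polytope $\Phi_G$ the argument cannot be repaired: reciprocity applied to $\Phi_G$ produces the compatible-pair statement of \cite{BZ06-2}, not the theorem at hand.

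Two further points would still need care even with the correct family of polytopes. First, a boundary lattice point of $k\cdot \Pb$ with some coordinates equal to $k$ and a point with those coordinates equal to $0$ (generally lying in a different $\P{b'}$) reduce to the \emph{same} $\Zk$-flow, so one must show that recording $\s(f)=\{e: f_e=k\}$ together with $f \bmod k$ is a bijection onto the pairs $(f,\s)$ --- in particular, surjectivity requires showing that for every pair the point $f'$ with $f'_e=k$ on $\s$ lands in a \emph{feasible} $\Pb$, which is done by perturbing $f'$ into the open cube along a kernel vector supplied by the totally cyclic orientation (Lemma~\ref{lem:TotCyc}). Second, the identification of the face data with a totally cyclic reorientation of the contraction $G_{/\supp(f)}$ (not of $G[\supp(f)]$ or of $G$ itself) comes from restricting a difference of relative-interior points to the edges at $0$ or $k$ and applying the Greene--Zaslavsky correspondence between components of $\ker A \setminus \bigcup_e\{p_e=0\}$ and totally cyclic reorientations; your sketch gestures at ``the classical correspondence'' but in the wrong ambient space.
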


Let us remark, that our result differs from the reciprocity theorem for
$k$-flows (cf.~\cite[Thm~3.1b]{BZ06-2}) inasmuch, that the flow and the
reorientation are not subject to a compatibility constraint. Rather, we reorient
at most those edges $e$ with $f(e)=0$ in the first place and contract all other
edges.

Let us illustrate the result with two examples that will accompany us
throughout.
\begin{Ex1}
    Consider the following graph $G_1$ with two vertices and three parallel
    and identically oriented edges $e_1,e_2,e_3$.
    \begin{center}
        \begin{overpic}[width=3cm]{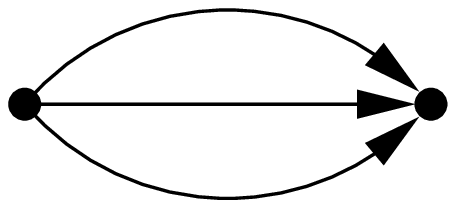}
            \put(45,8){\scriptsize$e_1$}
            \put(45,28){\scriptsize$e_2$}
            \put(45,48){\scriptsize$e_3$}
        \end{overpic}
    \end{center}
    The flow conservation for a flow $f$ is given by $f(e_1) + f(e_2) + f(e_3)
    = 0$ and this readily yields the number of nowhere-zero $\Zk$-flows as
    $\mFlow_{G_1}(k) = (k-1)(k-2)$.  The cyclotomic number of $G_1$ is
    $\xi(G_1) = 2$ and hence $(-1)^2\mFlow_{G_1}(-k) = (k+1)(k+2)$.  Now let
    us count the number of pairs $(f,\s)$ stated in Theorem~\ref{thm:MainFlow}
    according to $n_f = |\supp(f)|$. For $n_f = 0$, $f$ is the unique
    zero-flow and the number of totally cyclic orientations is $6$. The case
    $n_f = 1$ does not show up and for $n_f = 2$ there are exactly
    $\tbinom{3}{2}$ choices of non-zero edges and $(k-1)$ flows each time. The
    contraction in each case yields a loop which has two totally cyclic
    reorientations. Together with $n_f = 3$, in which case we count the number
    of nowhere-zero $\Zk$-flows, we get 
    $
        (k-1)(k-2) + 6 (k-1) + 6 = (k+1)(k+2).
    $ 
    \hfill $\exampleEnd$
\end{Ex1}

\begin{Ex2}
    Our second example is the multigraph $G_2$:
    \begin{center}
        \begin{overpic}[width=3cm]{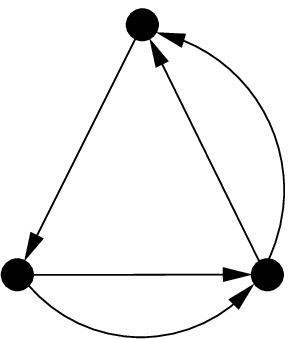}
            \put(17,57){$a$}
            \put(62,57){$b$}
            \put(85,57){$b^\prime$}
            \put(42,22){$c$}
            \put(42,3){$c^\prime$}
        \end{overpic}
    \end{center}
    In order to compute the flow polynomial, consider the case of a $3$-cycle
    $a,b,c$ without parallel edges. It is clear that the flow is determined by
    the (non-zero) value on the edge $a$ and the flow polynomial is thus
    $k-1$. Now every nowhere-zero flow on the $3$-cycle yields $(k-2)^2$
    nowhere-zero flows on $G_2$ since flow on $b$ and $c$ can be ``rerouted''
    through $b^\prime$ and $c^\prime$ as long as all remain nonzero. Hence,
    $\mFlow_{G_2}(k) = (k-1)(k-2)^2$ and $(-1)^{\xi(G_2)}\mFlow_{G_2}(-k) =
    (k+1)(k+2)^2$ with $\xi(G_2) = 3$.  The argument extends to counting the
    pairs $(f,\s)$ combinatorially, by lifting the pairs from the $3$-cycle.
    \hfill $\exampleEnd$
\end{Ex2}

We will now set the stage for the proof of Theorem~\ref{thm:MainFlow} which
will mainly consist of casting the statement of Theorem~\ref{thm:MainFlow}
into a discrete geometric statement involving lattice polytopes. As a first
step we will identify $\Zk$ with a set of coset representatives given by the
integers $0,1,\dots,k-1$. With this identification the flow conservation at a
vertex $v \in V$ can be rephrased as
\begin{equation*} 
\begin{aligned}
    \sum_{uv \in E} f_{uv} &\;-\; \sum_{vu \in E} f_{vu} \ = \ 0 & \text{ over
    $\Zk$}\\
    \Leftrightarrow \sum_{uv \in E} f_{uv} &\;-\; \sum_{vu \in E} f_{vu} \
    \equiv \ 0 & \text{ mod $k$ }\\
    \Leftrightarrow \sum_{uv \in E} f_{uv} &\;-\; \sum_{vu \in E} f_{vu} \
    = \ k \cdot b_v & \text{ for some  } b_v \in \Z.\\
\end{aligned}
\end{equation*}

Letting $A = A_G \in \{0,\pm1\}^{V \times E}$ be the \emph{incidence matrix}
of $G$, the last equivalence yields the following polyhedral reformulation: A
point $f \in \Z^E$ represents a nowhere-zero $\Zk$-flow if there is a $b \in
\Z^V$ such that $f$ is contained in $(k\cdot P^\circ_G(b)) \cap \Z^E$ where
\[
    P^\circ_G(b) := \{ p \in \R^E : Ap = b, 0 < p_e < 1 \text{
    for all } e \in E \}.
\]
Note that for such a $b$, $P^\circ_G(b)$ is a relatively open polytope of
dimension $\dim \Pbo = \rk A_G = \xi(G)$. Denote by $\B_G = \{ b \in \Z^V :
\Pbo \not= \emptyset \}$ the collection of all \emph{feasible} $b$'s. The set
$\B_G$ is clearly finite (since the cube is compact) and for distinct $b,
b^\prime \in \B_G$, the relatively open polytopes $\Pbo$ and $\Po{b^\prime}$ are necessarily
disjoint. The incidence matrix $\A_G$ of an oriented graph is \emph{totally
unimodular} (cf.,~for example, \cite[Sec.~19.3,Ex 2]{schr86}). This remains true
if we add rows that contain just a single $1$ to encode constraints like
$0\leq p_e\leq 1$. Standard methods (cf.  \cite[Thm.~19.1]{schr86}) then imply
that the closure $\Pb = \overline{\Pbo}$ is a vertex induced subpolytope of
the $|E|$-dimensional standard cube and, in particular, a \emph{lattice}
polytope.

\begin{Ex1}[continued]
The edge space of $G_1$ is three dimensional. The incidence matrix of $G_1$ is 
\[
    A = A_{G_1} =
    \left(\begin{array}{rrr} -1 & -1 & -1 \\ 1 & 1 & 1 \end{array}\right)
\] 
It follows that $P^\circ_{G_1}(b) = \{ x \in \R^3 : 0 < x_1,x_2,x_3 < 1,
x_1+x_2+x_3 = -b_1, x_1 + x_2 + x_3 = b_2 \}$ is non-empty iff $b^\prime = (-1,1)$ or
$b^{\prime\prime} = (-2,2)$. The following figure shows the two polytopes as
slices of the cube. The points correspond to the $6$ nowhere-zero
$\Z_4$-flows.
\begin{center}
    \hfill\includegraphics[width=7cm]{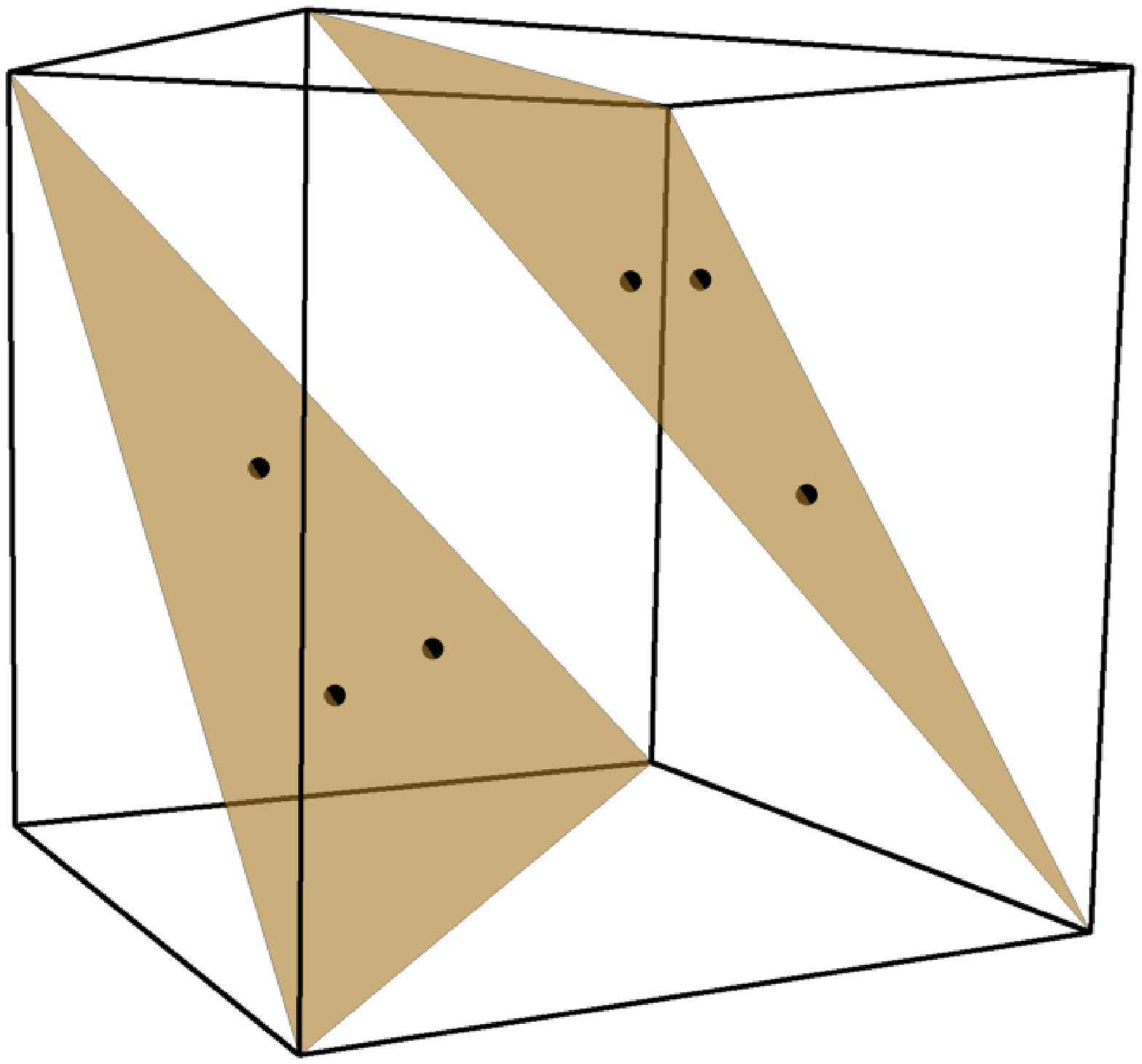}
     \hfill $\exampleEnd$
\end{center}
\end{Ex1}

For a polytope $P \subset \R^d$ the function $\ehr(P;k) := (k \cdot P) \cap
\Z^d$ is called the \emph{Ehrhart function} of $P$. Ehrhart \cite{ehr77}
showed that $\ehr(P;k)$ is a polynomial of degree $\dim P$ in case $P$ is a
lattice polytope. 

\begin{prop}\label{prop:EhrFlow}
    Let $G = (V,E)$ be an oriented graph. Then
    \[
        \mFlow_G(k) = \sum_{b \in \B_G} \ehr(\Pbo;k)
    \]
    is a sum of Ehrhart polynomials.\qed
\end{prop}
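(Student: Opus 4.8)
The plan is to show that the set of nowhere-zero $\Zk$-flows is partitioned, according to the value of $b$, into the lattice-point sets of the dilated open polytopes $k\cdot\Pbo$, and then invoke Ehrhart's theorem. First I would unwind the definition: a map $f : E \to \Zk$ is a nowhere-zero $\Zk$-flow exactly when, identifying $\Zk$ with $\{0,1,\dots,k-1\}$, we have $0 < f_e < k$ for every $e \in E$ and $A_G f \equiv 0 \pmod k$. The chain of equivalences already displayed in the text shows the congruence condition is equivalent to $A_G f = k\cdot b$ for some $b \in \Z^V$; dividing by $k$, this says $f/k$ lies in $\Pbo$, i.e.\ $f \in (k\cdot\Pbo)\cap\Z^E$. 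Thus each nowhere-zero $\Zk$-flow $f$ determines, and is recovered from, the pair consisting of the unique $b := \tfrac1k A_G f \in \Z^V$ and the lattice point $f \in (k\cdot\Pbo)\cap\Z^E$.

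Next I would record that this correspondence is a genuine partition. The $b$ arising this way is automatically feasible, so $b \in \B_G$; and since the relatively open polytopes $\Pbo$ for distinct $b$ are pairwise disjoint (as already noted in the text, because $A_G$ is a function), so are their dilates, hence a given $f$ contributes to exactly one summand. Conversely, any $b \in \B_G$ and any lattice point $f \in (k\cdot\Pbo)\cap\Z^E$ yields a nowhere-zero $\Zk$-flow by reading the inclusion backwards: $0 < f_e < k$ gives the nowhere-zero constraint once we reduce mod $k$, and $A_G f = kb$ gives flow conservation over $\Zk$. Counting both sides gives
\[
    \mFlow_G(k) \;=\; \sum_{b \in \B_G} \#\bigl((k\cdot\Pbo)\cap\Z^E\bigr) \;=\; \sum_{b \in \B_G} \ehr(\Pbo;k).
\]

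Finally I would address polynomiality. Each $\Pbo$ is a relatively open polytope whose closure $\Pb$ is a lattice polytope (this is exactly the total-unimodularity remark in the text, via the standard result that a TU constraint matrix with integral right-hand side defines an integral polytope). Ehrhart's theorem applied to $\Pb$ gives that $\ehr(\Pb;k)$ is a polynomial, and the open Ehrhart function $\ehr(\Pbo;k)$ is then also a polynomial — either by applying Ehrhart's theorem directly in the guise quoted in the text, or by an inclusion-exclusion over the faces of $\Pb$ together with Ehrhart-Macdonald reciprocity. A finite sum of polynomials is a polynomial, so $\mFlow_G$ is a polynomial as claimed. The only genuinely delicate point is making sure the open Ehrhart count behaves polynomially and that the boundary is not accidentally sitting on a lattice hyperplane in a way that spoils this; but since $\Pb$ is a lattice polytope, its relatively open interior is handled cleanly by the standard theory, so I expect no real obstacle here beyond citing the right form of Ehrhart's result.
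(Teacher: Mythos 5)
Your proposal is correct and follows essentially the same route as the paper, which treats the proposition as an immediate consequence of the preceding discussion (hence the \qed in the statement): the identification of nowhere-zero $\Zk$-flows with lattice points in the disjoint dilates $k\cdot\Pbo$, plus total unimodularity to get lattice polytopes and Ehrhart's theorem for polynomiality. Your extra care about the polynomiality of the open Ehrhart function (via reciprocity or face inclusion-exclusion) is a reasonable elaboration of a point the paper leaves implicit.
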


Probably the most appealing feature of Ehrhart theory is that Ehrhart
polynomials adhere to a beautiful geometric reciprocity (for details
see e.g.~\cite[Sect.~4]{BR07}).

\begin{thm}[Ehrhart--Macdonald reciprocity] \label{thm:EM}
    Let $P$ be a rational polytope and denote by $P^\circ$ the (relative) interior of
    $P$. Then
    \[
        \ehr(P^\circ;k) = (-1)^{\dim P} \ehr(P;-k).
    \]
\end{thm}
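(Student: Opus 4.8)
The plan is to derive Ehrhart--Macdonald reciprocity from the reciprocity for integer-point generating functions of rational cones. First I would reduce to the full-dimensional case: restricting to the affine hull of $P$ and equipping it with the induced lattice, we may assume $\dim P = d$ and $P\subseteq\R^d$ is full-dimensional. (For a lattice polytope we stay over $\Z^d$; a genuinely rational $P$ is handled by running the argument below verbatim with Ehrhart quasi-polynomials, the relevant generating functions still being rational.) Now place $P$ at height one and form the cone $C = \overline{\{\,t\cdot(x,1) : x\in P,\ t\ge 0\,\}}\subseteq\R^{d+1}$. Slicing at height $n$ identifies $C\cap(\R^d\times\{n\})$ with $nP$, so the number of lattice points of $C$ at height $n$ equals $\ehr(P;n)$ for $n\ge 0$; an elementary check shows that the relative interior $C^\circ$ meets height $n\ge 1$ exactly in $nP^\circ$ and contains no lattice point at height $0$. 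Hence, writing $z$ for the last coordinate, $\sum_{n\ge 0}\ehr(P;n)z^n$ and $\sum_{n\ge 1}\ehr(P^\circ;n)z^n$ are the height-graded integer-point transforms of $C$ and of $C^\circ$.

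The essential input is \emph{Stanley's reciprocity for pointed rational cones}: the transforms $\s_C(\bm z)=\sum_{m\in C\cap\Z^{d+1}}\bm z^m$ and $\s_{C^\circ}(\bm z)$ are rational functions and satisfy $\s_{C^\circ}(\bm z) = (-1)^{\dim C}\s_C(1/\bm z)$. This is the step I expect to be the main obstacle, and I would prove it in the standard way: triangulate $C$ into simplicial cones via a half-open (disjoint) decomposition, expand each simplicial summand as a product of geometric series over a half-open fundamental parallelepiped, and observe that passing to the interior amounts to shifting the half-open facets to the opposite side of each parallelepiped, which is precisely the substitution $\bm z\mapsto 1/\bm z$ together with the sign $(-1)^{\dim C}$. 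Specializing all coordinates except the last to $1$ — legitimate as a limit of rational functions away from their polar hyperplanes — yields
\[
    \sum_{n\ge 1}\ehr(P^\circ;n)\,z^n \;=\; (-1)^{d+1}\left(\sum_{n\ge 0}\ehr(P;n)\,z^n\right)\!\Big|_{\,z\mapsto 1/z},
\]
where the right-hand side means: take the rational function equal to $\sum_{n\ge 0}\ehr(P;n)z^n$, substitute $1/z$, and re-expand as a power series in $z$.

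Finally I would extract coefficients. By Ehrhart's theorem $\ehr(P;\,\cdot\,)$ agrees for nonnegative arguments with a polynomial of degree $d$, so $\sum_{n\ge 0}\ehr(P;n)z^n = Q(z)/(1-z)^{d+1}$ with $\deg Q\le d$. The classical fact that the Laurent expansion of such a rational function at $z=\infty$ is $-\sum_{n\ge 1}\ehr(P;-n)z^{-n}$ turns the displayed identity into $\sum_{n\ge 1}\ehr(P^\circ;n)z^n = (-1)^{d+1}\bigl(-\sum_{n\ge 1}\ehr(P;-n)z^n\bigr)$, that is $\ehr(P^\circ;n) = (-1)^d\ehr(P;-n)$ for all $n$; since both sides are polynomials (quasi-polynomials in the rational case), matching these coefficients suffices. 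The only bookkeeping to keep straight is the sign $(-1)^{d+1}\cdot(-1) = (-1)^d$.
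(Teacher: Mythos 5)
The paper does not prove Theorem~\ref{thm:EM}; it quotes it as a classical result with a pointer to \cite{BR07}, so there is no internal proof to compare yours against. Your argument is in fact the standard proof from that reference, and it is correct in outline: cone over $P$ at height one, apply Stanley's reciprocity for pointed rational cones to relate the integer-point transforms of $C$ and $C^\circ$, specialize all variables but the height variable to $1$, and convert the substitution $z\mapsto 1/z$ into evaluation at negative integers via the lemma that if $f(z)=\sum_{n\ge 0}p(n)z^n$ for a (quasi-)polynomial $p$ then $f(1/z)=-\sum_{n\ge 1}p(-n)z^n$ as rational functions. The sign bookkeeping $(-1)^{\dim C}\cdot(-1)=(-1)^{d+1}\cdot(-1)=(-1)^d$ is right, the identification of $C^\circ$ at height $n$ with $nP^\circ$ is right, and the reduction to the full-dimensional case together with the quasi-polynomial caveat for genuinely rational $P$ (which is what the theorem as stated requires, even though the paper only ever applies it to the lattice polytopes $\Pb$) is handled appropriately. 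The one place where you defer real work is Stanley's cone reciprocity itself, which you only sketch; but the sketch is the correct mechanism --- a half-open triangulation into simplicial cones, geometric-series expansion over half-open fundamental parallelepipeds, and the observation that the parallelepiped for the open cone is the reflection $w_1+\cdots+w_{d+1}-\Pi$ of the one for the closed cone, which produces exactly the substitution $z\mapsto 1/z$ and the sign $(-1)^{\dim C}$. I regard the proposal as a correct proof modulo that standard lemma, which is precisely the content of the chapter of \cite{BR07} the paper cites.
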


In light of Proposition~\ref{prop:EhrFlow} together with  Ehrhart-Macdonald
reciprocity it is sufficient to give a combinatorial meaning to the lattice
points in the boundary of $k \cdot \Pb$. Fix a $b \in \B_G$ and let $F \subset
\Pb$ be a proper face. As $\Pb$ is a section of the $|E|$-cube, there is a
partition $\s_- \cup \s_0 \cup \s_+ = E$ into disjoint parts such that the
relative interior of $F$ is given by all the points $p \in \Pb$ such that
\[
    \begin{array}{cccccl}
        0 &=& p_e & &   & \text{ for } e \in \s_-,\\
        0 &<& p_e &<& 1 & \text{ for } e \in \s_0, \text{ and }\\
          & & p_e &=& 1 & \text{ for } e \in \s_+.\\
    \end{array}
\]
A lattice point $f$ in the relative interior of $k \cdot F$ represents a
$\Zk$-flow but, since $0 \equiv k\; \mathsf{mod}\, k$, this representation is
not unique.  However, if $f^\prime \in (k \cdot \P{b^\prime}) \cap \Z^E$ for
some $b^\prime \in \B_G$ yields the same $\Zk$-flow modulo $k$ but is
different from $f$ otherwise, then $f^\prime \in \relint(k \cdot F^\prime)$
for a proper face $F^\prime \subset P_G(b^\prime)$ and $F \not= F^\prime$.
Thus, the idea is to keep track of the \emph{origin} of $f$. This leads to
reorientations on the contraction.

To this end, let $z_F$ and $z_P$ be points in the relative interiors of $F$
and $\Pb$ respectively and consider $z := z_P - z_F$. Then $z \in \ker A_G$
and we can predict the sign of $z_e$ for $e \in \s_- \cup \s_+$.  The next
lemma relates the kernel of $A_G$ to totally cyclic reorientations of $G$.

\begin{lem}[{\cite[Lem.~8.1]{GZ83}}] \label{lem:TotCyc}
    Let $G = (V,E)$ be a graph and $A = A_G$ its incidence matrix. The
    connected components of $ \ker A \setminus \{ p \in \R^E : p_e = 0 \text{
    for some } e \in E \}$ are in bijection with the totally cyclic
    reorientations of $G$. The totally cyclic reorientation $\s$ associated to
    a connected component is $\s = \{ e \in E : p_e < 0 \}$ for an arbitrary
    point $p$ in that component.
\end{lem}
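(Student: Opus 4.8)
The plan is to prove the statement in two conceptual moves: first describe the connected components of $\ker A \setminus \{p : p_e = 0 \text{ for some } e\}$ purely in terms of sign patterns, and then match these sign patterns with totally cyclic reorientations by a linear change of variables plus a flow-decomposition argument.

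\emph{Step 1: the components are sign classes.} For a sign vector $\varepsilon \in \{+,-\}^E$ let $O_\varepsilon = \{ p \in \R^E : \varepsilon_e p_e > 0 \text{ for all } e \in E \}$ be the corresponding open orthant. Since $\ker A$ is a linear subspace and $O_\varepsilon$ is convex, $\ker A \cap O_\varepsilon$ is convex, hence connected (possibly empty). As $\varepsilon$ ranges over $\{+,-\}^E$ these sets partition $K := \ker A \setminus \{ p : p_e = 0 \text{ for some } e \}$, and no continuous path inside $K$ can join two of them: if the path ran from $O_\varepsilon$ to $O_{\varepsilon'}$ with $\varepsilon_e \not= \varepsilon'_e$, then by the intermediate value theorem the $e$-th coordinate would vanish somewhere along the path, which is impossible in $K$. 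Hence the connected components of $K$ are exactly the nonempty sets $\ker A \cap O_\varepsilon$, and for any point $p$ in such a component the set $\{ e : p_e < 0 \}$ equals the constant set $\{ e : \varepsilon_e = - \}$.

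\emph{Step 2: sign classes versus positive circulations.} Given $\varepsilon$, put $\s = \s(\varepsilon) := \{ e : \varepsilon_e = - \}$. Reorienting the edges of $\s$ and replacing $p$ by its coordinatewise absolute value $q_e := |p_e|$ is a linear bijection from $\{ p \in \ker A_G : \mathrm{sign}(p) = \varepsilon \}$ onto $\{ q \in \ker A_{{}_\s G} : q_e > 0 \text{ for all } e \in E\}$: reversing an edge while negating its value contributes exactly the same term to the conservation equation at each endpoint, so $q$ is a circulation of ${}_\s G$ precisely when $p$ is one of $G$. Thus $\ker A_G \cap O_\varepsilon \not= \emptyset$ if and only if ${}_\s G$ admits a strictly positive real circulation. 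It remains to show that a graph $H$ admits a strictly positive real circulation if and only if $H$ is totally cyclic. If $H$ is totally cyclic, pick for each edge $e$ a directed cycle $C_e$ through $e$; then $\sum_{e \in E} \1_{C_e}$ is a circulation that is $\geq 1$ on every edge. Conversely, from a strictly positive circulation $q$ one peels off directed cycles: starting at the head of any edge and always continuing along an edge of positive value, a vertex must repeat, yielding a directed cycle $C$; subtract $(\min_{e \in C} q_e)\,\1_C$ and iterate on the strictly smaller support. This writes $q = \sum_i \lambda_i \1_{C_i}$ with $\lambda_i > 0$ and $C_i$ directed cycles, and since $q_e > 0$ for every $e$, each edge lies on some $C_i$, so $H$ is totally cyclic. (Loops are their own directed cycles and are covered uniformly; an edge on no directed cycle would force $p_e = 0$ for every circulation and so never occurs in $K$.) Combining Steps 1 and 2, $\varepsilon \mapsto \s(\varepsilon)$ gives a bijection from components of $K$ to totally cyclic reorientations, and by Step 1 the reorientation is $\{ e : p_e < 0\}$ for any $p$ in the component.

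The genuinely combinatorial part — and the step I expect to require the most care — is the equivalence between strictly positive circulations and total cyclicity in Step 2, which rests on the cycle-decomposition of circulations; everything else is bookkeeping about sign patterns and an elementary linear change of variables.
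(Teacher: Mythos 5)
The paper does not prove this lemma at all; it is quoted verbatim from Greene--Zaslavsky \cite[Lem.~8.1]{GZ83}, so there is no in-paper argument to compare against. Your proof is correct and is essentially the standard one. Step 1 correctly identifies the components of $\ker A \setminus \{p : p_e = 0 \text{ for some } e\}$ with the nonempty intersections of $\ker A$ with open orthants (convexity gives connectedness, the intermediate value theorem gives separation), and Step 2 correctly reduces, via the observation that reorienting $\s$ negates the corresponding columns of $A$, to the classical equivalence ``$H$ admits a strictly positive circulation iff $H$ is totally cyclic,'' which you establish in both directions (summing indicator vectors of covering directed cycles, and conversely peeling off directed cycles using conservation of flow to extend the walk and finiteness to close it up). You also handle the degenerate cases (loops, bridges) that could trip up the peeling argument. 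The only cosmetic remark is that in Step 1 it would be marginally cleaner to note that each set $\ker A \cap O_\varepsilon$ is open in $K$, so that a partition into open connected pieces immediately yields the components; but your IVT argument accomplishes the same thing. Nothing is missing.
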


The restriction $\tilde{z}$ of $z$ to $\s_+ \cup \s_-$ is an element of $\ker
A_{G / \s_0}$ and Lemma~\ref{lem:TotCyc} then asserts that $\s_+$ is a totally
cyclic reorientation for $G_{/\s_0}$ and uniquely identifies the face $F$ for
which $z_F \in \relint\,F$.

\begin{proof}[Proof of Theorem~\ref{thm:MainFlow}]
    Let $f \in k \cdot [0,1]^E \cap \Z^E$ be a lattice point in the $k$-th
    dilate of the $|E|$-dimensional standard cube. Let $\s(f) = \{ e \in E :
    f_e = k \}$ and denote by $\fb$ the point $f$ modulo $k$ componentwise.
    
    By our discussion and, in particular, Ehrhart-Macdonald reciprocity we
    have that
    \[
    (-1)^{\xi(G)}\mFlow_G(-k) = \left| \bigcup_{b \in \B_G} (k \cdot \Pb) \cap
    \Z^E \right|
    \]
    where the union on the right-hand side is over disjoint sets. The
    theorem follows by proving that $f \mapsto (\fb,\s(f))$ is a bijection
    between points in the right-hand side and pairs of $\Zk$-flows and totally
    cyclic orientations on the contraction of the support. However, by our
    previous reasoning it is clear that this is a well-defined map and we are
    left with showing that there exists an inverse mapping.

    Let $(\fb,\s)$ be a pair with $\fb : E \rightarrow \Zk$ a $\Zk$-flow and
    $\s \subseteq E \setminus \supp(\fb)$ a totally cyclic reorientation of
    $G_{/ \supp(\fb)}$. Let $f^\prime \in k \cdot [0,1]^E \cap \Z^E$ be the
    unique point with $f^\prime_e = k$ iff $e \in \s$ and $\overline{f^\prime}
    = \fb$. The point $f^\prime$ is in the boundary of $\Pb$ for $b =
    Af^\prime$ and we are done if we can show that $b \in \B_G$. Now, by
    Lemma~\ref{lem:TotCyc}, we can pick a vector $z \in \R^E$ with $Az = \0$,
    $z_e < 0$ for $e \in \sigma$ and $z_e > 0$ for $e \not\in \supp(\fb) \cup
    \s$.  Thus, for $\varepsilon > 0$ sufficiently small, $f^\prime +
    \varepsilon z$ is a point of $\Pb$ in the interior of the cube and this
    concludes the argument.  
\end{proof}

As an immediate Corollary we get the following known enumerative result.

\begin{cor}[{\cite[Cor.~1.3]{stanley73}}]
    Let $G = (V,E)$ be an oriented graph. Then $(-1)^{\xi(G)}\mFlow_G(-1)$ is
    the number of totally cyclic reorientations of $G$.
\end{cor}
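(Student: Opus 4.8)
The plan is to specialize the modular flow reciprocity theorem (Theorem~\ref{thm:MainFlow}) to $k=1$. The key point is that $\Z_1$ is the trivial group, so the constant map $f\equiv 0$ is the unique $\Z_1$-flow of $G$: flow conservation at every vertex holds trivially since each summand vanishes, and there is no other function $E\to\Z_1$. In particular $\supp(f)=\emptyset$, hence $E\setminus\supp(f)=E$ and $G_{/\supp(f)}=G$. Setting $k=1$ in Theorem~\ref{thm:MainFlow}, the number $(-1)^{\xi(G)}\mFlow_G(-1)$ therefore counts the pairs $(f,\s)$ in which $f$ is forced to be the zero flow and $\s\subseteq E$ is a totally cyclic reorientation of $G$. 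Since the first coordinate is uniquely determined, these pairs are in bijection with the totally cyclic reorientations of $G$, which is exactly the assertion.

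I expect no genuine obstacle here: the corollary is the smallest admissible instance of the main theorem, and the only thing to verify is the (trivial) description of $\Z_1$-flows recorded above. If one prefers a self-contained argument, one can instead read it off directly from Proposition~\ref{prop:EhrFlow} together with Ehrhart--Macdonald reciprocity (Theorem~\ref{thm:EM}): evaluating at $k=1$ gives $\mFlow_G(1)=\sum_{b\in\B_G}\ehr(\Pbo;1)$ and hence $(-1)^{\xi(G)}\mFlow_G(-1)=\bigl|\bigcup_{b\in\B_G}\Pb\cap\Z^E\bigr|$, a disjoint union; the lattice points of the undilated $0/1$-polytopes $\Pb$ are precisely their vertices, and by the discussion preceding the proof of Theorem~\ref{thm:MainFlow} (specialized to the case $\s_0=\emptyset$, i.e.\ to vertices) together with Lemma~\ref{lem:TotCyc} these vertices correspond bijectively to the totally cyclic reorientations of $G$.

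As a sanity check I would revisit the two running examples: for $G_1$ this yields $(-1)^{2}\mFlow_{G_1}(-1)=(1+1)(1+2)=6$, matching the six totally cyclic orientations of three parallel edges, and for $G_2$ it yields $(1+1)(1+2)^2=18$ totally cyclic reorientations, consistent with lifting the count from the underlying $3$-cycle.
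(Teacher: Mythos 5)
Your proof is correct and matches the paper's, which states the corollary as an immediate consequence of Theorem~\ref{thm:MainFlow} by exactly this specialization to $k=1$, where the zero flow is the unique $\Z_1$-flow and the pairs $(f,\s)$ reduce to totally cyclic reorientations of $G$. The alternative Ehrhart-theoretic reading and the numerical checks are consistent but not needed.
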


In particular, every totally cyclic reorientation belongs to exactly one
$\Pb$. It is worthwhile interpreting this partition of totally cyclic
reorientations as an equivalence relation.  The following proposition phrases
the equivalence in combinatorial terms.  For a set $\sigma \subseteq E$ we
denote by $e_\sigma \in \{0,1\}^E$ the characteristic vector of $\sigma$.

\begin{prop}
    Let $G = (V,E)$ be an oriented graph and $\sigma, \sigma^\prime \subseteq
    E$ two totally cyclic reorientations.  The points $e_\sigma,
    e_{\sigma^\prime} \in \{0,1\}^E$ are both vertices of $\Pb$ for some $b
    \in\B_G$ if and only if $_\sigma G$ can be obtained from $_{\sigma^\prime}
    G$ by the reversal of directed cycles.
\end{prop}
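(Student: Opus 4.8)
We want to characterize when two totally cyclic reorientations $\sigma,\sigma'$ have characteristic vectors $e_\sigma, e_{\sigma'}$ that are vertices of the *same* polytope $\Pb$. The key point is that $e_\sigma$ is always a vertex of some $\Pb$ (with $b = A_G e_\sigma$), and by Lemma~\ref{lem:TotCyc} together with the face analysis preceding the proof of Theorem~\ref{thm:MainFlow}, $e_\sigma$ is the unique lattice point in the relative interior of the face of $\Pb$ determined by the partition $\sigma_- = E\setminus\sigma$, $\sigma_0 = \emptyset$, $\sigma_+ = \sigma$; here $\sigma$ records a totally cyclic reorientation of $G_{/\sigma_0} = G$ itself. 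So $e_\sigma$ and $e_{\sigma'}$ are vertices of a common $\Pb$ if and only if $A_G e_\sigma = A_G e_{\sigma'}$, i.e.\ $A_G(e_\sigma - e_{\sigma'}) = 0$, i.e.\ $e_\sigma - e_{\sigma'} \in \ker A_G$.

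First I would make this reduction explicit: $e_\sigma, e_{\sigma'}$ are both vertices of $\Pb$ for a common $b\in\B_G$ iff $b = A_G e_\sigma = A_G e_{\sigma'}$, which holds iff $e_\sigma - e_{\sigma'}\in\ker A_G$. Then the task is to show that $e_\sigma - e_{\sigma'}\in\ker A_G$ if and only if $_\sigma G$ is obtained from $_{\sigma'} G$ by reversing a collection of directed cycles. Reversing a directed cycle $C$ in an oriented graph $H$ changes the incidence vector contribution by $\pm(\text{signed incidence vector of }C)$, which lies in $\ker A_H$ (over $\mathbb{R}$, and in fact over $\mathbb{Z}$); conversely the integral cycle space $\ker A_G\cap\Z^E$ is spanned by signed indicator vectors of cycles. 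This direction — reversing cycles changes $e_\sigma$ by an element of $\ker A_G$ — is routine once one sets up the bookkeeping: reversing directed cycles of $_{\sigma'}G$ to reach $_\sigma G$ corresponds to a change $e_\sigma - e_{\sigma'}$ equal to a signed sum of cycle vectors, hence in $\ker A_G$.

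The substantive direction is the converse: given $e_\sigma - e_{\sigma'}\in\ker A_G$, produce a sequence of directed-cycle reversals in $_{\sigma'}G$ turning it into $_\sigma G$. Set $z := e_\sigma - e_{\sigma'}$; its support is exactly the symmetric difference $\sigma\,\triangle\,\sigma'$, the set of edges whose orientation differs between $_\sigma G$ and $_{\sigma'}G$, and on that support $z_e = +1$ if $e\in\sigma\setminus\sigma'$ and $z_e = -1$ if $e\in\sigma'\setminus\sigma$. Since $z\in\ker A_G$, it is a $\{0,\pm1\}$-valued circulation on $G$, and a standard fact is that such a circulation decomposes as a sum of signed indicator vectors of edge-disjoint directed cycles of the oriented graph obtained from $G$ by flipping the edges where $z_e<0$ — which is precisely $_{\sigma'}G$ restricted to reading off cycles "in the direction of $z$". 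Concretely: in $_{\sigma'}G$, an edge $e$ with $z_e = +1$ (i.e.\ $e\in\sigma\setminus\sigma'$, so it points the "$G$-way" in $_{\sigma'}G$ but should be flipped) and an edge with $z_e=-1$ (i.e.\ $e\in\sigma'\setminus\sigma$) — routing each cycle of the decomposition consistently gives a directed cycle in $_{\sigma'}G$, and reversing all these (edge-disjoint) cycles flips exactly the edges of $\sigma\,\triangle\,\sigma'$, producing $_\sigma G$. The main obstacle, and where I'd spend the most care, is verifying that the cycles in this $\{0,\pm1\}$-circulation decomposition really are *directed* cycles of $_{\sigma'}G$ (not merely of the underlying undirected graph), and that reversing them one at a time keeps us inside the class of graphs reachable by cycle reversals — this is essentially the content of Lemma~\ref{lem:TotCyc} applied to $G_{/\sigma_0}=G$, since $\sigma$ and $\sigma'$ lie in the same connected component of $\ker A_G\setminus\{p : p_e = 0\text{ for some }e\}$ exactly when they differ by such reversals. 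I would phrase the argument to lean directly on Lemma~\ref{lem:TotCyc}: both $e_\sigma$ and $e_{\sigma'}$ witness the same totally cyclic reorientation class iff they lie over the same $b$, iff the segment between generic interior perturbations stays in $\ker A_G$ off the coordinate hyperplanes, which unwinds to the cycle-reversal description.
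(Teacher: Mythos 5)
Your proposal is correct and follows essentially the same route as the paper: reduce ``both vertices of a common $\Pb$'' to $e_\sigma - e_{\sigma^\prime} \in \ker A_G$, then identify such a $\{0,\pm1\}$-kernel vector with a disjoint union of directed cycles of $_{\sigma^\prime}G$ whose reversal yields $_\sigma G$. The only difference is that the paper outsources the decomposition step to \cite[Lem.~8.5]{GZ83}, whereas you carry out the standard circulation-into-directed-cycles decomposition explicitly (and also verify the easy direction), which is fine.
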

\begin{proof}
    The points $e_\sigma, e_{\sigma^\prime} \in \{0,1\}^E$ are both contained
    in a common $\Pb$ iff $z := e_\sigma - e_{\sigma^\prime}$ is an element of
    $\ker A_G$. By \cite[Lem.~8.5]{GZ83}, $z$ is a linear combination with
    non-negative coefficients of orientations of cycles of $G$.
\end{proof}

For an oriented graph $G = (V,E)$, we denote by $I_G \in \Z^V$ the
\emph{in-degree sequence}, that is the number $(I_G)_v$ of incoming edges for
every vertex $v \in V$. Similar, we define the \emph{out-degree sequence} $O_G
\in \Z^V$ of~$G$.

\begin{thm}
    Let $G = (V,E)$ be an oriented graph. Then $\B_G$ is in bijection with 
    \[
        \{ I_{_\sigma G} : \sigma \subseteq E \text{ totally cyclic
        reorientation} \}.
    \]
\end{thm}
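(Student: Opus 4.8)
The plan is to build the bijection explicitly and verify it is well-defined in both directions. First I would set up the map $\B_G \to \{ I_{_\sigma G} : \sigma \text{ totally cyclic}\}$ as follows: given $b \in \B_G$, pick a vertex $v_b \in \P{b}$ of the lattice polytope. By the cube-section description, $v_b = e_\sigma$ for some $\sigma \subseteq E$ with $\s_0 = \emptyset$, i.e.\ $A_G e_\sigma = b$. By the analysis preceding the proof of Theorem~\ref{thm:MainFlow} (the restriction of $z_P - z_F$ to $\s_+ \cup \s_-$ lying in $\ker A_{G/\s_0}$, here with $\s_0 = \emptyset$, so $z \in \ker A_G$ with $z_e < 0$ on $\sigma$ and $z_e > 0$ off $\sigma$), Lemma~\ref{lem:TotCyc} tells us $\sigma$ is a totally cyclic reorientation of $G$. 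I would then compute $A_G e_\sigma$ entrywise: at vertex $w$, the sum over edges incident to $w$ of $\pm 1$, where the sign records orientation. Reversing the edges of $\sigma$ turns $A_G e_\sigma$ into exactly $O_{_\sigma G} - I_{_\sigma G}$, and since $(O_{_\sigma G})_w + (I_{_\sigma G})_w = \deg_G(w)$ is independent of $\sigma$, the data $b = A_G e_\sigma$ is equivalent to the data $I_{_\sigma G}$ (an affine relation: $I_{_\sigma G} = \tfrac{1}{2}(\deg_G - b)$, interpreting $\deg_G$ as the degree sequence). So the assignment $b \mapsto I_{_\sigma G}$ is forced and does not depend on which vertex $v_b$ of $\P{b}$ we chose, because any two choices $e_\sigma, e_{\sigma'}$ with $A_G e_\sigma = A_G e_{\sigma'} = b$ give $I_{_\sigma G} = I_{_{\sigma'} G}$ directly from that affine relation.

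Next I would check surjectivity. Given any totally cyclic reorientation $\sigma$, Lemma~\ref{lem:TotCyc} provides $z \in \ker A_G$ with $z_e < 0$ exactly on $\sigma$. Then for $\varepsilon > 0$ small, $e_\sigma + \varepsilon z$ lies in $(0,1)^E$ and in $\{Ap = b\}$ for $b := A_G e_\sigma$; hence $\Pbo \neq \emptyset$, so $b \in \B_G$, and its image is $I_{_\sigma G}$. This is exactly the argument used at the end of the proof of Theorem~\ref{thm:MainFlow}.

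For injectivity I would argue: if $b, b' \in \B_G$ map to the same in-degree sequence $I_{_\sigma G} = I_{_{\sigma'} G}$ (with $e_\sigma$ a vertex of $\P{b}$, $e_{\sigma'}$ a vertex of $\P{b'}$), then by the affine relation $b = \deg_G - 2 I_{_\sigma G} = \deg_G - 2 I_{_{\sigma'} G} = b'$. So distinct feasible $b$'s give distinct in-degree sequences, and distinct in-degree sequences arising this way give distinct $b$'s. Conversely, for well-definedness of the map in the stated direction (from totally cyclic reorientations to $\B_G$), I note two totally cyclic $\sigma, \sigma'$ with the same in-degree sequence have $A_G e_\sigma = A_G e_{\sigma'}$, hence land in the same $\P{b}$; this shows the correspondence descends to the set on the right. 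The main point to be careful about is that the right-hand set is genuinely a \emph{set} of in-degree sequences (so different $\sigma$'s may collapse), and the content of the theorem is precisely that the collapsing on the right matches the partition of totally cyclic reorientations into the polytopes $\P{b}$ — which is exactly what the affine dictionary $b \leftrightarrow I_{_\sigma G}$ delivers. I expect the only mild obstacle is bookkeeping the signs in $A_G e_\sigma$ correctly under reorientation and confirming that a vertex of $\P{b}$ always has all coordinates in $\{0,1\}$ (i.e.\ $\s_0 = \emptyset$ at a vertex), which follows since $\P{b}$ is a face-subpolytope of the cube — already established in the excerpt.
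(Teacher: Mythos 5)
Your proposal is correct and follows essentially the same route as the paper: the invertible affine dictionary between $b = A_G e_\sigma$ and $I_{_\sigma G}$, combined with identifying the ($0/1$-)vertices of the polytopes $\Pb$, $b\in\B_G$, with the totally cyclic reorientations via Lemma~\ref{lem:TotCyc} and the perturbation argument. One small slip: with the paper's sign convention the relation is $I_{_\sigma G} = I_G - A_G e_\sigma$ (equivalently $\tfrac12(\deg_G + b_0) - b$ with $b_0 = A_G\1$), not $\tfrac12(\deg_G - b)$; since any invertible affine relation suffices, this does not affect the argument.
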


\begin{proof}
    Let $A_G$ be the incidence matrix of $G$. It is clear that we can recover
    $I_G$ from the knowledge of the (undirected) degree sequence $D = I_G +
    O_G$ and $I_G - O_G = A_G \1 =: b_0$. Now, for any reorientation $\sigma
    \subseteq E$, we have
    \[
        I_{_\sigma G} - O_{_\sigma G} = A_{_\sigma G} \1 = A_G (\1 - 2
        e_\sigma) = b_0 - 2A_G e_\sigma.
    \]
    Hence, the in-degree sequence of $_\sigma G$ is uniquely determined by
    $b_\sigma = A_G e_\sigma$.  Moreover, the reversal of a directed cycle in
    $G$ leaves the in- and out-degree sequences invariant and thus is
    invariant within $\Pb$. Using that $\B_G = \{ b = A_G e_\sigma : \sigma
    \subseteq E \text{ totally cyclic reorientation } \}$ finishes the proof.
\end{proof}

Dilating a lattice polytope by a factor $0$ yields a lattice point and it can
be shown that indeed the Ehrhart polynomial has constant term equal to $1$.
As $\mFlow_G$ is the sum of the (open) Ehrhart polynomials for $\Pbo$, we
recover the following result by Gioan~\cite{gioan07}. 

\begin{cor}[{\cite[Thm.~3.1]{gioan07}}]
    Let $G = (V,E)$ be an oriented graph and $t_G(x,y)$ its Tutte polynomial.
    Then 
    \[
    t_G(0,1) = (-1)^{\xi(G)} \mFlow_G(0) = \sum_{b \in \B_G} \ehr(\Pb;0) =
    |\B_G|
    \]
     is the number of in-degree sequences of
    totally cyclic reorientations of $G$.
\end{cor}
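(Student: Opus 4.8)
The plan is to establish the displayed chain of four equalities one link at a time, reading it from right to left; none of the links is deep, and the corollary amounts to assembling Proposition~\ref{prop:EhrFlow}, Ehrhart--Macdonald reciprocity (Theorem~\ref{thm:EM}), the theorem on $\B_G$ immediately preceding the corollary, and the classical flow--Tutte identity.

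I would begin in the geometric middle of the chain. By Proposition~\ref{prop:EhrFlow} we have $\mFlow_G(k) = \sum_{b\in\B_G}\ehr(\Pbo;k)$. As already observed, each $\Pbo$ is relatively open of dimension $\dim\Pb = \xi(G)$ --- its affine hull being the fiber $\{p\in\R^E : A_G p = b\}$, of dimension $|E|-\rk A_G = \xi(G)$ --- so the exponent appearing in Theorem~\ref{thm:EM} is the same for every $b\in\B_G$. Hence $\ehr(\Pbo;k) = (-1)^{\xi(G)}\ehr(\Pb;-k)$ for all $b$, the global sign $(-1)^{\xi(G)}$ factors out of the sum, and evaluating the resulting polynomial identity at $k=0$ gives $(-1)^{\xi(G)}\mFlow_G(0) = \sum_{b\in\B_G}\ehr(\Pb;0)$. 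Each $\Pb$ is a nonempty lattice polytope, so $\ehr(\Pb;0)=1$ --- the fact flagged just before the statement; here $0\cdot\Pb$ is the single lattice point $\{0\}$ --- and the sum collapses to $|\B_G|$.

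It then remains to read off the two outer links. On the right, the theorem immediately preceding the corollary gives a bijection between $\B_G$ and $\{I_{_\sigma G} : \sigma\subseteq E\text{ a totally cyclic reorientation}\}$, so $|\B_G|$ is by definition the number of in-degree sequences of totally cyclic reorientations of $G$. On the left, $t_G(0,1) = (-1)^{\xi(G)}\mFlow_G(0)$ is the classical expression of the modular flow polynomial as a line evaluation of the Tutte polynomial, $\mFlow_G(k) = (-1)^{\xi(G)}t_G(0,1-k)$, specialized to $k=0$. I do not anticipate a genuine obstacle here: the only point deserving a moment's care is that $\dim\Pb$ is the same for all $b\in\B_G$, since this is precisely what lets the sign be pulled outside the sum over $\B_G$; everything else is a direct application of results already in hand.
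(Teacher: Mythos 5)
Your argument is correct and follows essentially the same route as the paper, which likewise obtains the identity by combining Proposition~\ref{prop:EhrFlow}, Ehrhart--Macdonald reciprocity, the fact that each lattice polytope $\Pb$ has Ehrhart constant term $1$, and the bijection between $\B_G$ and in-degree sequences from the preceding theorem. Your explicit note that $\dim\Pb=\xi(G)$ uniformly over $b\in\B_G$ (so the sign factors out of the sum) is a worthwhile clarification, and in fact corrects the paper's earlier slip writing $\dim\Pbo=\rk A_G$ where $|E|-\rk A_G$ is meant.
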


The evaluation $t_G(0,1)$ has several known interpretations as, for example, the number of spanning trees with zero external activity,
the Euler characteristic of the independence complex of the matroid $M_G$
associated to $G$, or the number of facets of the broken circuit complex of
the dual matroid $M_G^\perp$. We refer the reader to the survey article by
Brylawski and Oxley \cite{BO92} for further details.

\section{Modular flows inside-out}
\label{sec:FlowsInsideOut}

In this section we relate our previous construction to inside-out
polytopes. The benefit in doing so will be a simple geometric explanation for
the following fact.

\begin{cor}
    The modular flow polynomial of an oriented graph $G$ has degree $\xi(G)$
    and leading coefficient $1$.
\end{cor}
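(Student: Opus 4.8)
The plan is to read off both the degree and the leading coefficient directly from Proposition~\ref{prop:EhrFlow}, using the basic structure theory of Ehrhart polynomials. Recall that $\mFlow_G(k) = \sum_{b \in \B_G} \ehr(\Pbo;k)$, where each $\Pbo$ is a relatively open lattice polytope of dimension $\rk A_G = \xi(G)$. For a $d$-dimensional lattice polytope $P$, the Ehrhart polynomial $\ehr(P;k)$ has degree $d$ with leading coefficient equal to the relative volume $\vol(P)$ (the volume normalized with respect to the lattice in the affine hull of $P$); by Ehrhart--Macdonald reciprocity the open Ehrhart polynomial $\ehr(P^\circ;k) = (-1)^d \ehr(P;-k)$ has the same degree $d$ and the same leading coefficient $\vol(P)$. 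So each summand $\ehr(\Pbo;k)$ contributes a term of degree exactly $\xi(G)$ with a strictly positive leading coefficient, and since all summands have nonnegative leading coefficients there is no cancellation. Hence $\deg \mFlow_G = \xi(G)$ and the leading coefficient of $\mFlow_G$ is $\sum_{b \in \B_G} \vol(\Pb)$.

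The remaining point — and the only part requiring a genuine argument — is that this sum of relative volumes equals $1$. First I would observe that the closed polytopes $\{\Pb\}_{b \in \B_G}$ tile a region of $\R^E$: they are the maximal-dimensional slices $A_G p = b$ of the cube $[0,1]^E$ as $b$ ranges over feasible integer values, they have disjoint interiors (distinct $b$ force disjoint affine hyperplanes once we note that $\rk A_G$ equals the codimension of each slice, i.e.\ these are honest $\xi(G)$-dimensional transverse sections), and their union is exactly $[0,1]^E \cap \{p : A_G p \in \Z^V\}$. Equivalently, the linear map $A_G$ sends the cube onto a zonotope, and the fibers over integer points partition into the $\Pb$. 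The cleanest way to evaluate $\sum_b \vol(\Pb)$ is then a change-of-variables / Fubini argument: the integer translates of the parallelepiped spanned by the columns of $A_G$ restricted to a spanning set of edges tile $\R^V$ up to measure zero, and fiberwise this says precisely that the total relative volume of all the slices $\Pb$ is $1$. Alternatively, and perhaps more in the spirit of the paper, one can simply note that $\ehr(\Pb;0) = 1$ for each $b$ (constant term of an Ehrhart polynomial), that $\sum_{b} \ehr(\Pb;0) = |\B_G| = t_G(0,1)$ by the corollary just proved, and — no, that gives the constant term of a different polynomial, so I would instead argue the volume identity geometrically.

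Concretely, here is the geometric step I expect to be the crux. Pick a basis $B \subseteq E$ of the cycle space complement, i.e.\ a spanning forest, so that the columns of $A_G$ indexed by $B$ form a basis of the column space (which has dimension $|V| - c(G)$), and the remaining $\xi(G)$ coordinates are free on each slice. Projecting each $\Pb$ onto the free coordinates $E \setminus B$ is a lattice-volume-preserving map (this is where total unimodularity of $A_G$ enters: the projection has determinant $\pm 1$ on the relevant lattices), and the images of the $\Pb$ over all $b \in \B_G$ tile the unit cube $[0,1]^{E \setminus B}$ exactly once. Therefore $\sum_{b \in \B_G} \vol(\Pb) = \vol([0,1]^{E\setminus B}) = 1$. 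Putting this together with the first paragraph gives that $\mFlow_G$ has degree $\xi(G)$ and leading coefficient $1$. The main obstacle is making the tiling-and-projection claim precise — in particular checking that the projection is a bijection on the union (surjectivity uses feasibility, injectivity uses that the $b$ are integral and the fibers are genuinely disjoint) and that it preserves lattice volume — but all of this is standard for totally unimodular systems and is exactly the content behind the remark, made in the next section, that this is naturally explained via inside-out polytopes.
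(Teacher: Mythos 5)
Your argument is correct and is essentially the paper's: the paper likewise reduces everything to $1=\sum_{b\in\B_G}\vol(\Pb)$ and establishes this by identifying each $\Pbo$, via restriction to the non-forest coordinates $\Tc$ (a unimodular, hence lattice-volume-preserving, projection), with a chamber of the inside-out polytope $([0,1]^{\Tc},\H)$, so that the slices tile a $\xi(G)$-dimensional unit cube. The only quibble is your claim that the union of the closed $\Pb$ for $b\in\B_G$ is \emph{exactly} $[0,1]^E\cap\{p:A_Gp\in\Z^V\}$ (slices meeting only the boundary of the cube may be missed), but this is a measure-zero issue that does not affect the volume computation.
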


This, in turn, is a consequence of the fact that the number of modular
$\Zk$-flows is a \emph{generalized Tutte--Grothendieck invariant}
(cf.\ Section~\ref{sec:TutteInterpretation}) and, hence, obeys the following
deletion-contraction property.

\begin{prop}[{\cite[Prop.~6.3.4]{BO92}}] \label{prop:FlowRecursion}
    Let $G = (V,E)$ be an oriented graph and $e \in E$ an edge. If $e$ is
    neither a loop nor a coloop, then
    \[
    \mFlow_G(k) = \mFlow_{G_{/e}}(k) - \mFlow_{G_{\setminus e}}(k).
    \]
    Otherwise, $\mFlow_G(k) = (k-1)\mFlow_{G_{\setminus e}}(k)$ if $e$ is a loop
    and $\mFlow_{G}(k) = 0$ if $e$ is a coloop.
\end{prop}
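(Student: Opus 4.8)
The plan is to give the classical bijective proof in terms of $\Zk$-flows, treating the three cases of the statement separately. Since $\mFlow_G$, $\mFlow_{G_{/e}}$ and $\mFlow_{G_{\setminus e}}$ are all polynomials (Tutte \cite{tutte54}), it will be enough to establish each of the claimed equalities for every positive integer $k$, reading $\mFlow_H(k)$ as the number of nowhere-zero $\Zk$-flows of $H$; two polynomials agreeing at all positive integers coincide.

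For the coloop case I would observe that $\{e\}$ is an edge cut of $G$: summing the flow-conservation equations over the vertices on one of the two sides of this cut makes every edge other than $e$ cancel, forcing $f(e)=0$ for every $\Zk$-flow $f$. Hence $G$ has no nowhere-zero $\Zk$-flow and $\mFlow_G(k)=0$. For the loop case, say $e=vv$, I would note that a loop contributes $f(e)-f(e)=0$ to the conservation equation at $v$ and occurs in no other equation, so $f$ is a $\Zk$-flow of $G$ exactly when $f|_{E\setminus\{e\}}$ is a $\Zk$-flow of $G_{\setminus e}$ with $f(e)\in\Zk$ arbitrary; restricting to nowhere-zero flows forces $f(e)\neq 0$ and $f|_{E\setminus\{e\}}$ nowhere-zero, giving $\mFlow_G(k)=(k-1)\,\mFlow_{G_{\setminus e}}(k)$.

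For the remaining case, $e=uv$ neither a loop nor a coloop (so $u\neq v$), I would introduce the auxiliary set $\Phi_k$ of all maps $f:E\to\Zk$ satisfying conservation at every vertex of $G$ and nowhere-zero on $E\setminus\{e\}$, with no constraint on $f(e)$, and count $\Phi_k$ in two ways. Splitting by the value of $f(e)$: the members with $f(e)\neq 0$ are precisely the nowhere-zero $\Zk$-flows of $G$, and restriction $f\mapsto f|_{E\setminus\{e\}}$ identifies the members with $f(e)=0$ with the nowhere-zero $\Zk$-flows of $G_{\setminus e}$, so $|\Phi_k|=\mFlow_G(k)+\mFlow_{G_{\setminus e}}(k)$. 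On the other hand I would argue that $f\mapsto f|_{E\setminus\{e\}}$ is a bijection from $\Phi_k$ onto the nowhere-zero $\Zk$-flows of $G_{/e}$ (whose edge set is $E\setminus\{e\}$): the conservation equation at the vertex of $G_{/e}$ obtained by identifying $u$ and $v$ is exactly the sum of the conservation equations of $G$ at $u$ and at $v$, in which $f(e)$ cancels, so the image of an element of $\Phi_k$ is a flow of $G_{/e}$; conversely, given such a flow $g$, the coefficient of $f(e)$ in the conservation equation of $G$ at $u$ is $\pm 1$, so there is a unique scalar $f(e)\in\Zk$ restoring conservation at $u$, and conservation then holds at every vertex of $G$ except possibly $v$, whence summing the conservation equations over the connected component of $G$ containing both $u$ and $v$ forces conservation at $v$ too. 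Comparing the two expressions for $|\Phi_k|$ yields $\mFlow_G(k)=\mFlow_{G_{/e}}(k)-\mFlow_{G_{\setminus e}}(k)$.

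The step I expect to be the main obstacle is precisely this last bijection, which is really a bookkeeping point: one must check that identifying $u$ and $v$ does not change the number of connected components of the graph (it does not, since $u$ and $v$ already lie in a common component), so that the conservation equations keep spanning a space of the same codimension and the ``component sum'' argument above is legitimate; and one should not be distracted by the loops and extra parallel edges that contracting $e$ can create in $G_{/e}$, which are permitted here and cause no trouble. One could alternatively deduce the recursion from the polytopal decomposition of Proposition~\ref{prop:EhrFlow} by relating $\B_G$, $\B_{G_{/e}}$, $\B_{G_{\setminus e}}$ and the corresponding Ehrhart polynomials, but the direct count above is the shortest route.
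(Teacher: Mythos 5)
Your proof is correct. Note, however, that the paper does not actually prove Proposition~\ref{prop:FlowRecursion}: it cites it from Brylawski--Oxley and remarks that it follows from the fact that the number of nowhere-zero $\Z_k$-flows is a generalized Tutte--Grothendieck invariant. Your argument is the standard direct derivation of that invariance: all three cases check out. The coloop case correctly uses that a bridge is the unique edge of a cut, so summing conservation over one side forces $f(e)=0$; the loop case is immediate from the zero column of the incidence matrix; and in the main case your double count of the auxiliary set $\Phi_k$ is sound --- the restriction map to $G_{/e}$ is a bijection because the conservation equation at the merged vertex is the sum of those at $u$ and $v$, and the unique lift of $f(e)$ determined at $u$ automatically satisfies conservation at $v$ (your component-sum argument works; even more directly, the equations at $u$ and $v$ sum to the one at the merged vertex, which already holds). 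Your computations are essentially the same as those the paper carries out in Appendix~\ref{sec:comb-proof-for-flows} (Lemmas~\ref{lem:flow-projection} and \ref{lem:flow-lifting}) for the richer invariant $\rmFlow_G(k)$, so your write-up supplies, in self-contained form, exactly the special case the paper delegates to the reference.
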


While the degree of the polynomial is clear from our interpretation in terms
of Ehrhart polynomial, the fact that the leading coefficient is $1$ is not.

\begin{prop}[{\cite[Cor.~3.20]{BR07}}]
    Let $P$ be a $d$-dimensional lattice polytope then the leading coefficient
    of $\ehr(P;k)$ is the volume $\vol(P)$ of $P$.
\end{prop}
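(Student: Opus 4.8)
The plan is to extract the leading coefficient from the asymptotics of the lattice-point count and to identify that asymptotics with the volume via a Riemann-sum estimate. By Ehrhart's theorem, $\ehr(P;k) = \#(kP \cap \Z^d)$ agrees for all positive integers $k$ with a polynomial of degree $d = \dim P$; writing this polynomial as $\sum_{i=0}^{d} c_i k^i$, its leading coefficient is $c_d = \lim_{k \to \infty} k^{-d}\,\ehr(P;k)$. Hence it suffices to prove $k^{-d}\#(kP \cap \Z^d) \to \vol(P)$ as $k \to \infty$.

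First I would handle the full-dimensional case $P \subset \R^d$, $\dim P = d$. Here $k^{-d}\#(kP \cap \Z^d) = k^{-d}\sum_{x \in \Z^d}\mathbf{1}_P(x/k)$ is precisely the Riemann sum of the indicator function $\mathbf{1}_P$ over the grid $\tfrac{1}{k}\Z^d$. Since $P$ is a polytope, $\partial P$ is a finite union of polytopes of dimension at most $d-1$ and hence Lebesgue-null, so $\mathbf{1}_P$ is Riemann integrable and these sums converge to the $d$-dimensional volume $\vol(P)$. Concretely, one anchors a half-open unit cube $x + [0,1)^d$ at each $x \in kP \cap \Z^d$; the union of these cubes agrees with $kP$ except within distance $\sqrt{d}$ of $\partial(kP)$, and since the $(d-1)$-volume of $\partial(kP)$ is $O(k^{d-1})$ this discrepancy has volume $O(k^{d-1})$. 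Thus $\#(kP \cap \Z^d) = \vol(kP) + O(k^{d-1}) = k^d\vol(P) + O(k^{d-1})$, which gives the claim.

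Then I would reduce the general case to this one. If $\dim P = d$ but $P \subset \R^n$ with $n > d$, fix a vertex $v \in P \cap \Z^n$ and let $L$ be the linear span of $P - v$. As $P$ is a lattice polytope, $L$ is rational and $\Lambda := \Z^n \cap L$ is a rank-$d$ lattice in $L$, with respect to which the relative volume $\vol(P)$ is defined. Since $v$ is a lattice point, $\Z^n \cap (kv + L) = kv + \Lambda$, so $\#(kP \cap \Z^n) = \#(k(P - v) \cap \Lambda)$; choosing a linear isomorphism $\R^d \to L$ mapping $\Z^d$ onto $\Lambda$ carries $P - v$ to a full-dimensional lattice polytope $P' \subset \R^d$ with $\ehr(P;k) = \ehr(P';k)$ and $\vol(P') = \vol(P)$, and the previous paragraph applies to $P'$.

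The main obstacle is the boundary estimate in the second step --- bounding the number of lattice points within bounded distance of $\partial(kP)$ --- and, in the reduction, keeping the normalization of the relative lattice $\Lambda$ consistent with the definition of $\vol(P)$. Both are routine, but it is exactly the polytopality of $P$ (the finiteness and $(d-1)$-dimensional structure of $\partial P$) that makes the boundary tame enough for the estimate to succeed.
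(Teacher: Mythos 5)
Your argument is correct; the paper itself only cites this result from \cite{BR07}, and the proof there is exactly your approach: identify the leading coefficient as $\lim_{k\to\infty}k^{-d}\ehr(P;k)$ and evaluate the limit as a Riemann sum, with the lower-dimensional case handled by the relative volume normalized to the lattice in the affine span. Your treatment of both the boundary estimate and the reduction to the full-dimensional case is sound.
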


So, the best we can say so far is that
\[
    1 = \sum_{b \in \B_G} \vol(\Pb).
\]
However, the answer we are aiming at is that suitably arranging the polytopes
$\Pb$ yields a subdivision of a standard cube of dimension $\xi(G)$. Thus, the
total volume of the $\Pb$'s is that of the standard cube. 

In particular, the subdivision of the cube is induced by an arrangement of
hyperplanes and therefore directly leads to inside-out polytopes.  For our
needs a \emph{(lattice) inside-out polytope} is a pair $(P,\H)$ consisting of
a $d$-dimensional lattice polytope $P \subset \R^d$ and a hyperplane
arrangement $\H = \{H_1,\dots,H_m\}$ such that any flat that meets $P$ also
meets the interior of $P$.  The hyperplane arrangement is allowed to be
infinite as long as only finitely many hyperplanes meet $P$.  The \emph{open
Ehrhart function} of $(P,\H)$ is the function $\ehr(P^\circ,\H;k) = | k
\cdot(P^\circ \setminus \cup\H)  \cap \Z^d |$. In the case of a lattice
inside-out polytope, i.e. $\H$ subdivides $P$ into lattice polytopes, the open
(as well as the closed) Ehrhart function is a polynomial of degree $d$ and
leading coefficient $\vol(P)$. 

Let $G = (V,E)$ be an oriented graph and let $T \subseteq E$ be a spanning
forest, i.e.,~a spanning tree per component.  Let $\Tc = E \setminus T$ denote
the edges not in the forest. We will use $T$ to construct a cycle basis,
i.e.,~a basis for $\ker \A_G$, combinatorially. Let $\oA \in \{0,\pm1\}^{ \Tc
\times E}$ be the matrix with rows $\oA_{f\bull}$ for $f \in \Tc$ defined as
follows: The support of $\oA_{f\bull}$ is given by the edges in the unique
undirected cycle $K$ in $G[T \cup f]$. There is a unique reorientation of $K$
that makes it an oriented cycle and that fixes the orientation on $f$. The
signs keep track of which edges have to be reoriented.  It is known
(e.g.~\cite[Thm.~11.1]{Schri03}) that every flow is an integral linear
combination of the $C_{f\bull}$.  This implies that every nowhere-zero
$\Zk$-flow corresponds to a unique point $h \in \Z^{\Tc}$ with $0 < h_f < k$
for $f \in \Tc$ and $(h\oA)_e \not\equiv 0 \mod k$ for all $e \in T$. Denote
by $\oA_{\bull e}$ the columns of $\oA$ for $e \in T$, then this yields the
following interpretation in terms of inside-out polytopes.

\begin{prop}
    Let $P = [0,1]^{\Tc}$ be the standard cube in $\R^{\Tc}$ and let $\H$ be
    the arrangement of hyperplanes $H_{e,d_e} = \{ q \in \R^{\Tc} : 
    \oA_{\bull e}^T\,  q  = d_e \}$ for $e \in T$ and $d_e \in \Z$. Then
    \[
        \mFlow_G(k) = \ehr(P^\circ,\H;k).
    \]
\end{prop}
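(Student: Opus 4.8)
The plan is to unwind the correspondence between nowhere-zero $\Zk$-flows and the lattice points described by the cycle basis $\oA$, and to recognize the resulting conditions as exactly the defining conditions of the open inside-out Ehrhart function $\ehr(P^\circ,\H;k)$. First I would fix the spanning forest $T$ and the matrix $\oA \in \{0,\pm1\}^{\Tc\times E}$ as in the text, noting that its restriction $\oA_{\Tc\times\Tc}$ is the identity (since the row $\oA_{f\bull}$ has entry $\pm 1$ in column $f$ and the cycle $K$ in $G[T\cup f]$ meets $\Tc$ only in $f$). By the cited fact (\cite[Thm.~11.1]{Schri03}), the rows of $\oA$ form a basis of $\ker\A_G$ over $\Z$, so the integral $\Z$-flows of $G$ are precisely the vectors $h\oA$ for $h\in\Z^{\Tc}$, and the map $h\mapsto h\oA$ is a bijection $\Z^{\Tc}\to\ker\A_G\cap\Z^E$. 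Taking everything modulo $k$, the nowhere-zero $\Zk$-flows of $G$ are in bijection with the set of $h\in(\Zk)^{\Tc}$ such that $(h\oA)_e\not\equiv 0\pmod k$ for every $e\in E$; since $(h\oA)_f = h_f$ for $f\in\Tc$, this splits into the two stated families of constraints.

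Next I would translate this into a lattice-point count inside the dilated cube. Identifying $\Zk$ with $\{0,1,\dots,k-1\}$, an element $h\in(\Zk)^{\Tc}$ satisfying $h_f\not\equiv 0$ for all $f\in\Tc$ corresponds to a lattice point $q\in k\cdot P^\circ\cap\Z^{\Tc}$, i.e. a point with $0<q_f<k$; here I must be mildly careful that $P^\circ$ is the \emph{open} cube so that the count $\ehr(P^\circ;k)$ is $(k-1)^{|\Tc|}$, matching the number of admissible residues. The remaining condition $(h\oA)_e\not\equiv 0\pmod k$ for $e\in T$ becomes: for each $e\in T$, $\oA_{\bull e}^T q = (h\oA)_e$ is not an integer multiple of $k$. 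Scaling by $k$, this says precisely that $q$ does not lie on any hyperplane $k\cdot H_{e,d_e} = \{q: \oA_{\bull e}^T q = k d_e\}$ with $d_e\in\Z$, i.e. $q\notin \bigcup(k\cdot\H)$ where $\H = \{H_{e,d_e}: e\in T, d_e\in\Z\}$. Hence the nowhere-zero $\Zk$-flows are in bijection with $k\cdot(P^\circ\setminus\cup\H)\cap\Z^{\Tc}$, which is by definition $\ehr(P^\circ,\H;k)$, giving $\mFlow_G(k) = \ehr(P^\circ,\H;k)$ as claimed.

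I would also check that $(P,\H)$ is genuinely a lattice inside-out polytope, so that the phrase $\ehr(P^\circ,\H;k)$ is meaningful as a polynomial: only finitely many of the hyperplanes $H_{e,d_e}$ meet the bounded cube $P$ (those with $0\le d_e\le |\text{supp}\,\oA_{\bull e}|$, say), each such flat is rational, and—since $\oA$ is a submatrix of the totally unimodular $\A_G$ and its extension by coordinate rows, as recalled earlier in the paper—the arrangement subdivides $P$ into lattice polytopes; in particular the leading coefficient of $\ehr(P^\circ,\H;k)$ is $\vol(P)=1$ and the degree is $|\Tc| = \xi(G)$, recovering the corollary on the leading coefficient. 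The only genuinely delicate point is the bookkeeping of signs and the claim that $\oA$ is a $\Z$-basis of $\ker\A_G$ with $\oA_{\Tc\times\Tc}=\mathrm{Id}$; once that is in place, the modular reduction and the rescaling of the hyperplane conditions are routine. The main obstacle is therefore making the bijection $h\leftrightarrow q$ and the correspondence "$(h\oA)_e\equiv 0\pmod k$" $\leftrightarrow$ "$q\in k\cdot H_{e,d_e}$ for some $d_e\in\Z$" airtight, especially keeping track that the \emph{open} cube is the right object and that no off-by-one error creeps into the edge-in-$T$ constraints.
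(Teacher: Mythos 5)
Your argument is correct and follows essentially the same route as the paper: parametrize the integral flows by the cycle basis $\oA$ (with $\oA_{\Tc\times\Tc}=\mathrm{Id}$), reduce modulo $k$, and read off the nowhere-zero conditions as membership in $k\cdot P^\circ$ minus the dilated hyperplanes $k\cdot H_{e,d_e}$. The only slip is in your side-check that $(P,\H)$ is a \emph{lattice} inside-out polytope: $\oA$ is not a submatrix of $\A_G$ but the network matrix of $G$ with respect to $T$, which is totally unimodular for that (standard) reason; this does not affect the counting identity itself.
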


In some sense the above construction is dual to that presented in 
Section~\ref{sec:Flows}: whereas the linear conditions in the construction of 
Section~\ref{sec:Flows} forced the points to be $\Zk$-flows, the construction
here satisfies that automatically, and violating the linear conditions enforces
the nowhere-zero condition. This is an instance of oriented matroid duality
of the hyperplane arrangement $\H$.

\begin{Ex1}[continued]
    For the choice of the spanning tree $T = \{e_1\}$, the cycle basis is
    \begin{align*}
        &\hphantom{\Bigl(}\begin{array}{rrr} \scriptstyle e_1 & \scriptstyle
            e_2 & \scriptstyle e_3 \end{array}\\
        \oA = 
        \begin{array}{rr} 
            \scriptstyle e_2 \\ 
            \scriptstyle e_2 \\ 
        \end{array}
        &
        \Bigl(\!
        \begin{array}{rrr} -1 & 1 & 0 \\ -1 & 0 & 1 \end{array}\!\Bigr).
    \end{align*}
    The polytope $P$ is a $2$-cube and the hyperplane
    arrangement $\H$ is given by $H_{e_1,d} = \{ (q_1,q_2) \in \R^{\Tc} : q_1 +
    q_2 = -d \}$ for $d \in \Z$. The resulting inside-out polytope is the
    following.
    \begin{center}
        \includegraphics[width=4cm]{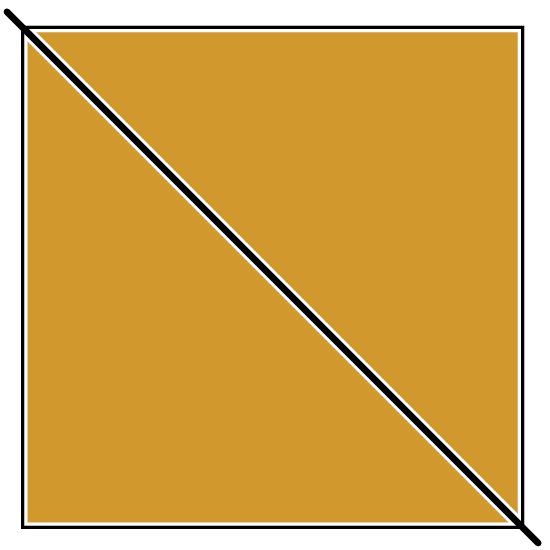}
    \end{center}
     \hfill $\exampleEnd$
\end{Ex1}

\begin{Ex2}[continued]
    For $G_2$ we pick the spanning tree $T = \{a,b\}$ and, hence, the cycle
    basis
    \begin{align*}
        &\hphantom{\Bigl(}\begin{array}{rrrrr} 
            \scriptstyle a & 
            \hspace{1em}\scriptstyle b & 
            \hspace{.3em}\scriptstyle b^\prime & 
            \scriptstyle c &
            \scriptstyle c^\prime 
        \end{array}\\
        \oA = 
        \begin{array}{l} 
            \scriptstyle b^\prime \\ 
            \scriptstyle c\\
            \scriptstyle c^\prime\\
        \end{array}\!\!
        &
        \Biggl(\!
        \begin{array}{rrrrr} 
            & -1  & 1 &   &   \\
          1 &  1&   & 1 &   \\
          1 &  1&   &   & 1 \\
        \end{array}\!\Biggr).
    \end{align*}
    So the two parallel classes of hyperplanes correspond to $H_{a,d_a} = \{
    (b^\prime,c,c^\prime) \in \R^{\Tc} : c + c^\prime = d_a \}$ and
    $H_{b,d_b} = \{ (b^\prime,c,c^\prime) \in \R^{\Tc} : -b^\prime + c +
    c^\prime = d_b \}$. The resulting inside-out polytope is:
    \begin{center}
        \includegraphics[width=8cm]{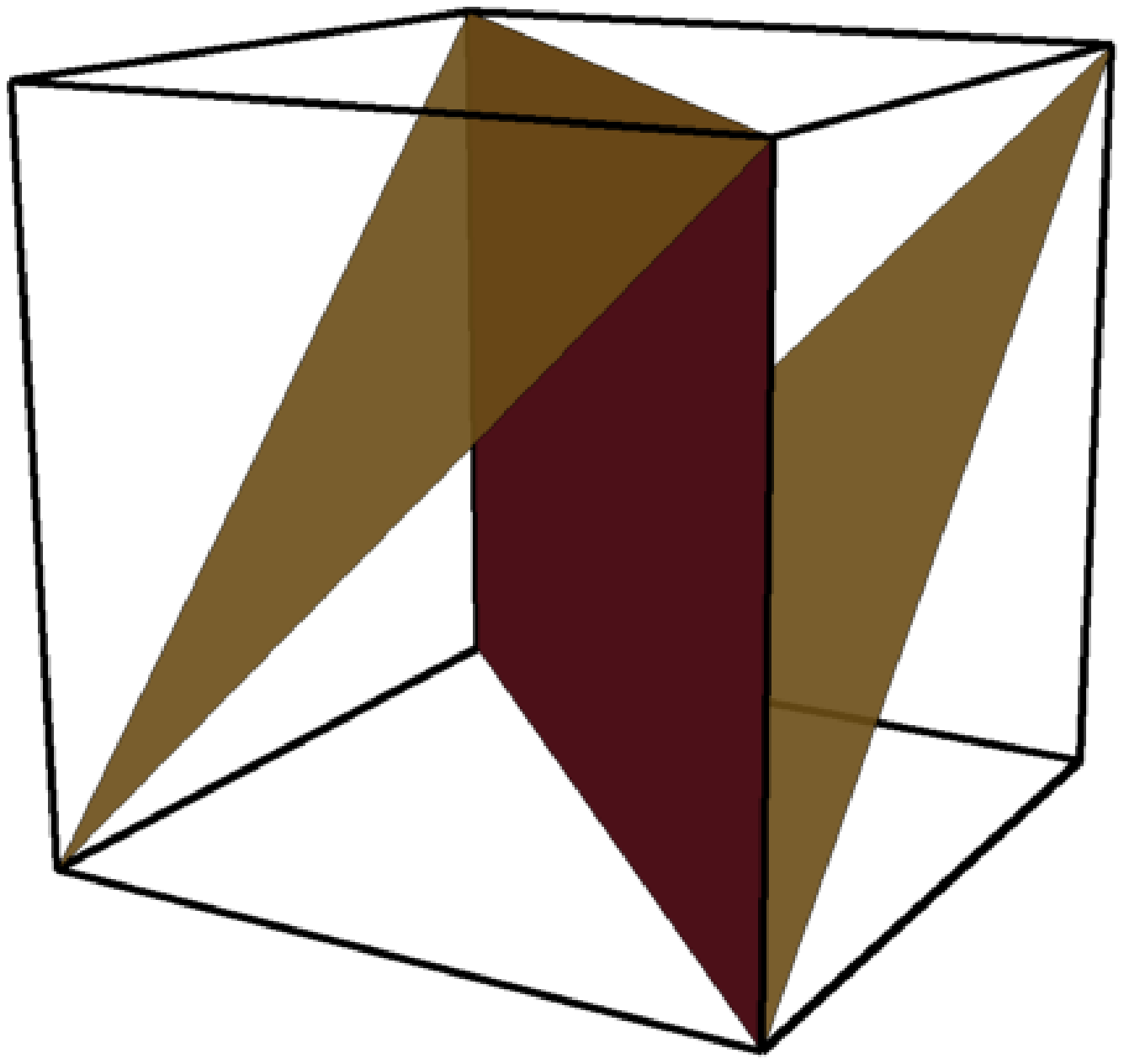}
    \end{center}
    The four chambers correspond to the feasible $b$'s.
     \hfill $\exampleEnd$
\end{Ex2}

So the leading coefficient of $\mFlow_G$ refers to the volume of a
$\xi(G)$-dimensional standard cube $P = [0,1]^{\Tc}$.  The chambers,
i.e.,~connected components of $P^\circ \setminus \cup\H$ are in bijection with
$\B_G$. Indeed, every chamber is isomorphic to a unique $\Pbo$ under the
restriction to the coordinates $\Tc$.  With some more work it is possible to
see the reciprocity in this picture and, via toric arrangements, this line of
thought has been pursued by Babson and Beck \cite{BB09}. One point worth
mentioning is that the deletion-contraction property of the flow polynomial
can be nicely observed in terms of the freedom of choice for a spanning
forest.  The contraction of an edge $e$ simply removes the family of
hyperplanes $H_{e,d_e}$ from the arrangement. The resulting over count can be
compensated for by subtracting the number of lattice points in the inside-out
polytope obtained by restricting to this family. However, choosing a spanning
forest not containing $e$ yields an inside-out polytope where the family
$H_{e,d_e}$ corresponds to two parallel facets of the cube and restricting
yields the inside-out polytope for $G_{\setminus e}$.

\section{Modular tensions and Stanley's Reciprocity Theorem}
\label{sec:Tensions}

In the introduction we mentioned Stanley's reciprocity theorem for the
chromatic polynomial $\chi_G$ of a graph $G=(V,E)$. It turns out that this
reciprocity is best observed in the related setting of modular tensions. In
this section we sketch the changes to our previous constructions in order to
accommodate modular tensions and we formulate a reciprocity theorem. 

An \defn{$\l$-coloring} is a map $c: V \rar \{0,\ldots,\l-1\}$. An
$\l$-coloring $c$ is called \defn{proper} if $c(u)\not=c(v)$ whenever $u$ and
$v$ are adjacent in $G$ and  the chromatic polynomial $\chi_G(\l)$ counts the
number of proper $\l$-colorings of $G$. A reorientation $\s$ of $G$ is
\defn{acyclic} if no edge of $G$ lies on a directed cycle. A coloring $c$ and
an acyclic reorientation $\s$ are \defn{compatible} if for every edge $uv$ of
$_\s G$ we have $c(u)\leq c(v)$.

\begin{thm}[Stanley \cite{stanley73}]\label{thm:StanleyReciprocity}
    For an oriented graph $G=(V,E)$ and $\l > 0$ an integer, 
    $(-1)^{|V|}\chi_G(-\l)$ is the number of tuples $(c,\s)$ where $c$ is an
    $\l$-coloring and $\s \subseteq E $ is a compatible acyclic reorientation.
\end{thm}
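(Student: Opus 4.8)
The plan is to mirror the proof of Theorem~\ref{thm:MainFlow} in the dual setting of modular tensions, replacing the incidence matrix $A_G$ and its kernel by the cycle matrix $C_G$ and its row space (equivalently, the cut space of $G$). First I would recall the standard fact that $\l$-colorings are closely tied to $\Z_\l$-tensions: an $\l$-coloring $c: V \rar \{0,\dots,\l-1\}$ induces the tension $t = \delta c$ with $t_{uv} = c(v) - c(u)$, and $c$ is proper exactly when $t$ is nowhere-zero. Up to the $c(G)$-fold choice of a basepoint value per component, colorings and tensions correspond, so $\chi_G(\l) = \l^{c(G)} \cdot (\text{number of nowhere-zero } \Z_\l\text{-tensions})$ --- actually it is cleaner to work directly with the tension polynomial and transfer at the end, so I would first set up a polyhedral model exactly as in Section~\ref{sec:Flows} but with the roles of cycles and cuts swapped.

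Concretely: a $\Z_\l$-tension is an edge map $t$ such that $\sum_{e} (C_G)_{Ke}\, t_e \equiv 0 \bmod \l$ for every cycle $K$ in a cycle basis, i.e.\ $t$ lies in the cut space mod $\l$. Identifying $\Z_\l$ with $\{0,\dots,\l-1\}$ and clearing the modulus as in the flow case, a nowhere-zero $\Z_\l$-tension is a lattice point in $\l \cdot Q^\circ_G(b)$ for some feasible $b$, where $Q^\circ_G(b) := \{ p \in \R^E : C_G\, p = b,\ 0 < p_e < 1\}$ is a relatively open lattice polytope of dimension $\rk C_G = |E| - \xi(G)$; this uses total unimodularity of $C_G$ exactly as before. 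Then the tension analogue of Proposition~\ref{prop:EhrFlow} expresses the modular tension polynomial as a sum of open Ehrhart polynomials over feasible $b$'s, Ehrhart--Macdonald reciprocity (Theorem~\ref{thm:EM}) converts $(-1)^{|E|-\xi(G)} \mFlow^{\text{tension}}_G(-\l)$ into a count of lattice points in the closed polytopes $\l \cdot Q_G(b)$, and a face $F$ of $Q_G(b)$ with $\s_- = \{p_e = 0\}$, $\s_+ = \{p_e = 1\}$ carries, by the cut-space analogue of Lemma~\ref{lem:TotCyc} (namely \cite[Lem.~8.1]{GZ83} applied to the dual matroid / the cographic side, which says the sign-components of the cut space minus the coordinate hyperplanes biject with acyclic reorientations), an acyclic reorientation $\s_+$ of $G_{/\s_0}$ that pins down the face. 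The bijection $f \mapsto (\fb, \s(f))$ and its inverse go through verbatim, giving a reciprocity for nowhere-zero $\Z_\l$-tensions. Finally I would translate back to colorings: unwinding $\chi_G(\l) = \l^{c(G)}(\text{nwz }\Z_\l\text{-tensions})$ and checking the sign bookkeeping ($|E| - \xi(G) = |V| - c(G)$, and the extra $\l^{c(G)}$ contributes the remaining sign and accounts for the free basepoint colors), one recovers exactly the count of pairs $(c,\s)$ with $\s$ a compatible acyclic reorientation --- where "compatible" is precisely the condition $c(u) \le c(v)$ on edges of $_\s G$, which is what the inequalities $0 \le p_e \le 1$ encode once the tension is read as a color difference.

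The main obstacle, as in the flow case, is the correct identification of the combinatorial data attached to a proper face of $Q_G(b)$: one must verify that the restriction of the "direction vector" $z = z_Q - z_F \in \ker(\text{constraints})$ to $\s_+ \cup \s_-$ lies in the cut space of the contraction $G_{/\s_0}$ and that its sign pattern yields a genuine acyclic reorientation of $G_{/\s_0}$ (not merely of $G$), plus the matching translation of "acyclic reorientation of $G_{/\s_0}$ together with a coloring of $G$ zero on $\supp(t)$" into "compatible acyclic reorientation of $G$" after accounting for basepoints. The delicate point is that contraction on the tension side corresponds to \emph{deletion} on the graph, so the face data must be phrased on $G_{\setminus \s_0}$ or its appropriate minor; getting this dual bookkeeping exactly right --- and confirming the sign $(-1)^{|V|}$ rather than $(-1)^{|V| - c(G)}$ by tracking the $\l^{c(G)}$ factor --- is where the argument needs care. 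Everything else is a routine transcription of Section~\ref{sec:Flows} under graphic/cographic matroid duality.
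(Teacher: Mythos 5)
Your proposal follows the paper's own route: the paper likewise derives Stanley's theorem from a reciprocity statement for modular tensions (its Theorem~\ref{thm:TensionReciprocity}), which it in turn only sketches by dualizing the polyhedral/Ehrhart argument of Section~\ref{sec:Flows} exactly as you describe --- cycle matrix in place of incidence matrix, \cite[Lem.~7.1]{GZ83} in place of \cite[Lem.~8.1]{GZ83}, and deletion of $\supp(t)$ in place of contraction of $\supp(f)$. The one step you gesture at but do not supply is the one the paper proves in full detail: for a fixed coloring $c$ with associated tension $t$, the compatible acyclic reorientations of $G$ are in bijection with the acyclic reorientations of $G_{\setminus\supp(t)}$, where the nontrivial direction is that the orientation forced on $\supp(t)$ by compatibility cannot close a directed cycle, because any directed cycle in a compatible orientation is monochromatic and hence lies in $E\setminus\supp(t)$, contradicting acyclicity there; this, together with the sign bookkeeping $\chi_G(-\l)=(-\l)^{c(G)}\mTen_G(-\l)$ that you do note, completes the reduction. (A small slip: $\rk C_G=\xi(G)$, so the tension polytope has dimension $|E|-\rk C_G=|V|-c(G)$; the dimension you actually use in the reciprocity is the right one.)
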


The idea that leads to the notion of tensions is the following. Let us suppose
for a moment that $G$ is connected. Then we can recover the coloring $c$ by
knowing the \emph{initial} color $c_0 = c(v_0)$ of some vertex $v_0$ and the
difference of the colors $t(uv) := c(u) - c(v)$ on each (oriented) edge $uv$.
Of course, we now make use of the fact that our set of colors is embedded in
a group.  Thus, we recover the color on a vertex $w$ by adding and
subtracting the tensions $t(e)$ of edges along a path from $v_0$ to $w$.
Note that the color of $c(w)$ is independent of the chosen path and we take
this as the defining property of tensions. 

For a cycle $C \subseteq E$ in the underlying undirected graph, we denote by
$C_- \subset C$ a collection of edges whose reorientation turns $C$ into a
directed cycle and we let $C_+ := C \setminus C_-$ be the remaining edges.
Let $\Grp$ be an abelian group. A map $t : E \rar \Grp$ is called an
\emph{$\Grp$-tension} if 
\[
    \langle C, t \rangle := \sum_{e \in C_+} t(e) - \sum_{e \in C_-} t(e) = 0
\]
for each undirected cycle $C \subseteq E$. Note that this is independent of
the choice of $C_-$.  An induced coloring $c$ is proper if and only if $t$ is
\emph{nowhere-zero}, that is $\supp(t) := \{ e \in E : t(e) \not= 0 \} = E$.
It can be shown that, as in the case of flows, the number of nowhere-zero
$\Grp$-tensions depends only on the order of $\Grp$ and we define
$\mTen_G(\l)$ to be the number of nowhere-zero $\Zl$-tensions of $G$.

Every nowhere-zero $\Zl$-tension $t$ yields $|\Zl|^{c(G)}$ different colorings
by choosing an initial color for every component and this proves the following
known fact that $\chi_G(\l)=\l^{c(G)} \mTen_G(\l)$, i.e.\ the tension
polynomial is a non-trivial factor of the chromatic polynomial.  Hence, we
arrive at the following equivalent reformulation of
Theorem~\ref{thm:StanleyReciprocity} which is already implicit in Stanley's
work.

\begin{thm}\label{thm:TensionReciprocity}
    Let $G=(V,E)$ be an oriented graph.
    Then $(-1)^{|V|-c(G)}\mTen_G(-\l)$ counts pairs $(t,\s)$ where $t$ is
    a $\Zl$-tension on $G$ and $\s$ is an acyclic reorientation of
    $G_{\backslash\supp(t)}$.
\end{thm}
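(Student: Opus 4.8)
The plan is to mirror the development of Section~\ref{sec:Flows}, replacing flows on edges by tensions and the incidence matrix $A_G$ by the cycle matrix $C_G$ (whose rows are indexed by a cycle basis, or more conveniently by all cycles). First I would introduce the polyhedral model: identifying $\Zl$ with the representatives $0,1,\dots,\l-1$, a map $t\colon E\to\{0,\dots,\l-1\}$ is a $\Zl$-tension precisely when $\langle C,t\rangle\equiv 0\pmod \l$ for every undirected cycle $C$, i.e.\ when there is an integer vector $b$ indexed by the cycles with $\langle C,t\rangle=\l\,b_C$. This lets me define relatively open polytopes $Q^\circ_G(b):=\{q\in\R^E: \langle C,q\rangle=b_C\text{ for all }C,\ 0<q_e<1\}$, the finite feasible set $\B^*_G$ of $b$'s for which $Q^\circ_G(b)\neq\emptyset$, and observe that $\mTen_G(\l)=\sum_{b\in\B^*_G}\ehr(Q^\circ_G(b);\l)$ is a sum of open Ehrhart polynomials, exactly as in Proposition~\ref{prop:EhrFlow}. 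The total unimodularity needed for these to be lattice polytopes follows because the network (cycle) matrix of a graph is totally unimodular, and remains so after adjoining unit rows for the cube constraints; this is where I would cite the same standard facts from \cite{schr86} used earlier.

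Next I would apply Ehrhart--Macdonald reciprocity (Theorem~\ref{thm:EM}) termwise, using that $\dim Q^\circ_G(b)=\rk C_G=|V|-c(G)$, to get
\[
(-1)^{|V|-c(G)}\mTen_G(-\l)=\Big|\bigcup_{b\in\B^*_G}(\l\cdot \overline{Q_G(b)})\cap\Z^E\Big|,
\]
a disjoint union over $b\in\B^*_G$. As in the flow proof, a lattice point $t$ in the $\l$-th dilate of the cube with some coordinates equal to $0$ and some equal to $\l$ represents a $\Zl$-tension $\bar t$ non-uniquely, and the ``origin'' of $t$ must be recorded. Here the analogue of Lemma~\ref{lem:TotCyc} is the statement (again due to Greene--Zaslavsky, the dual of Lem.~8.1 in \cite{GZ83}) that the connected components of $\operatorname{im}(C_G^{\,T})\setminus\{p:p_e=0\text{ for some }e\}$ — equivalently, the components of the cut space minus the coordinate hyperplanes — are in bijection with the acyclic reorientations of $G$, with the reorientation $\s$ attached to a component being $\{e:p_e<0\}$. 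I would then set $\s(t):=\{e\in E:t_e=\l\}$ and $\s_0:=\supp(\bar t)$, restrict the difference vector $z=z_Q-z_F$ between relative-interior points of the ambient polytope and the relevant face to the coordinates $E\setminus\s_0$, note it lies in the cut space of $G_{\setminus\s_0}$ (deletion, not contraction, is the dual operation), and conclude that $\s(t)$ is an acyclic reorientation of $G_{\setminus\supp(\bar t)}$, and that $\s(t)$ together with $\bar t$ determines the face $F$, hence the point $t$.

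The map $t\mapsto(\bar t,\s(t))$ is therefore well defined from the right-hand side to the claimed set of pairs. For the inverse, given $(\bar t,\s)$ with $\s$ an acyclic reorientation of $G_{\setminus\supp(\bar t)}$, let $t'$ be the unique cube-lattice-point with $t'_e=\l$ iff $e\in\s$ and $\overline{t'}=\bar t$; set $b=C_G t'$. I must check $b\in\B^*_G$, which amounts to producing a vector $z$ in the cut space of $G$ with $z_e<0$ on $\s$ and $z_e>0$ off $\supp(\bar t)\cup\s$ — this exists by the acyclic-reorientation lemma applied to $G_{\setminus\supp(\bar t)}$ (extend by a positive constant on $\supp(\bar t)$), so that $t'+\varepsilon z$ lies in $Q^\circ_G(b)$ for small $\varepsilon>0$. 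This gives the bijection and proves the theorem. The main obstacle, and the one point I would state carefully, is getting the dual combinatorics exactly right: verifying that passing from flows to tensions swaps $\ker A_G$ with $\operatorname{im}C_G^{\,T}$, swaps contraction with deletion, and swaps totally cyclic with acyclic reorientations — in particular pinning down the precise Greene--Zaslavsky statement for acyclic reorientations and checking it behaves well under deleting the support of $\bar t$. Everything else is a routine transcription of the arguments in Section~\ref{sec:Flows}.
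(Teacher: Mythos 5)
Your proposal is correct, and it is essentially the route the paper itself only sketches: transcribe the Section~\ref{sec:Flows} argument under the dictionary incidence matrix $\leftrightarrow$ cycle-basis matrix $C$, cycle space $\leftrightarrow$ cut space, contraction $\leftrightarrow$ deletion, and \cite[Lem.~8.1]{GZ83} $\leftrightarrow$ \cite[Lem.~7.1]{GZ83}. The paper's actually written-out proof is different: it derives Theorem~\ref{thm:TensionReciprocity} from Stanley's Theorem~\ref{thm:StanleyReciprocity} via the factorization $\chi_G(\l)=\l^{c(G)}\mTen_G(\l)$, by a bijection between acyclic reorientations of $G_{\setminus\supp(t)}$ and acyclic reorientations of $G$ compatible with a coloring inducing $t$; your Ehrhart-theoretic argument is self-contained and yields Stanley's theorem as a corollary rather than presupposing it. Two notational slips to fix: the tension (cut) space is $\ker C_G=\operatorname{im}(A_G^{T})$, not $\operatorname{im}(C_G^{T})$ (the latter is the cycle space $\ker A_G$); and $\rk C_G=\xi(G)$, so $\dim Q_G(b)=|E|-\rk C_G=|V|-c(G)$ --- your final quantities are right, the intermediate identities are not. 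Finally, in the inverse construction the extension of $z$ from the cut space of $G_{\setminus\supp(\bar t)}$ to that of $G$ is obtained by applying $A_G^{T}$ to the same vertex potential; its values on $\supp(\bar t)$ are not freely prescribable (and need not be positive), but this is harmless since $0<t'_e<\l$ holds there already.
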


\begin{proof}[Equivalence of Theorems~\ref{thm:StanleyReciprocity} and \ref{thm:TensionReciprocity}]
In light of the fact that $\chi_G(\l)=\l^{c(G)} \mTen_G(\l)$ it suffices to
argue that for a given $k$-coloring $c$ and a corresponding $\Zk$-tension $t$,
the acyclic reorientations of $G_{\setminus\supp(t)}$ are in bijection with
the acyclic reorientations of $G$ that are compatible with $c$. If $\s$ is an
acyclic reorientation of $G$ then clearly $\s\cap E\setminus \supp(t)$ is an
acyclic reorientation of $G_{\setminus\supp(t)}$. Conversely, let $\s'$ be an
acyclic reorientation  of $G_{\setminus\supp(t)}$. We have to show that there
is a unique extension of $\s'$ to an acyclic reorientation $\s$ of $G$ that is
compatible with $c$. However, the condition that $e=uv$ has to be oriented
from $u$ to $v$ whenever $c(u)<c(v)$ fixes the reorientation of all edges in
$\supp(t)$. Suppose the resulting reorientation $\s$ did contain a directed
cycle $C$. Then all vertices on $C$ have to have the same color with respect
to $c$, as following an edge can never decrease the color. But this means that
$C\subseteq E\setminus\supp(t)$ which is a contradiction to
$_{\s'}G_{\setminus\supp(t)}$ being acyclic.
\end{proof}

Note the similarity to the statement of Theorem~\ref{thm:MainFlow}. This
is not at all surprising from the (oriented) matroid point of view as
we have the following correspondences:
\begin{center}
\begin{tabular}{r@{$\;\;\leftrightarrow\;\;$}l}
    flow $f$ & tension $t$ \\
    $\xi(G)$ & $|V| - c(G)$\\
    totally cyclic & acyclic\\
    $G_{/\supp(f)}$ & $G_{\backslash \supp(t)}$ \\
\end{tabular}
\end{center}

Let us quickly remark on the conception of a ``tension''. If $\Grp$ is an
ordered group, such as $\Z$, then the elements in $\Grp$ can be thought of
heights and a tension measures the difference in altitude along an edge. In
particular, if we reorient $G$ such that $t(e) > 0$ for all $e$, then it is
clear that every nowhere-zero $\Z$-tension yields an acyclic reorientation. 

We refrain from giving a proof of Theorem~\ref{thm:TensionReciprocity} but
instead supply the necessary modifications to the proofs for flow reciprocity
in Sections~\ref{sec:Flows} and \ref{sec:FlowsInsideOut}.

For an arbitrary spanning tree $T$, let $C$ be the cycle basis constructed in
Section~\ref{sec:FlowsInsideOut}. It suffices to verify the defining
properties of tensions on such a cycle basis as every cycle is a
superposition of these elementary cycles. Thus, identifying $\Zl = \{ 0,\dots,
\l-1\}$, we have that $t : E \rar \Z$ represents a nowhere-zero $\Zl$-tension if
$0 < t(e) < \l$ for every $e \in E$ and there is a $d \in \Z^{\Tc}$ such that
$Ct = \l \cdot d$. Analogously to the flow case, this formulation furnishes a
collection of relatively open disjoint polytopes whose Ehrhart polynomials
yield the nowhere-zero tension polynomial. 

For the reciprocity the key lemma that yields the interpretation in terms of
acyclic reorientations is

\begin{lem}[{\cite[Lem.~7.1]{GZ83}}]
    Let $G = (V,E)$ be an oriented graph and $C$ a cycle basis.
    Then the connected components of $\ker C \setminus \{ p \in \R^{E} : p_e =
    0 \text{ for some } e \in E \}$ are in bijection with the acyclic
    reorientations of $G$.
\end{lem}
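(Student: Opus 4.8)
The plan is to run the ``dual'' of the proof of Lemma~\ref{lem:TotCyc}, trading cycles for cuts and flows for tensions. The first step is to identify the linear space at hand. Since the cycle basis $C$ is the one built from a spanning forest in Section~\ref{sec:FlowsInsideOut}, its rows lie in $\ker A_G$ and there are $|\Tc| = |E|-|V|+c(G) = \dim\ker A_G$ of them; hence the rows of $C$ form a basis of the cycle space $\ker A_G$, and consequently
\[
    \ker C \;=\; (\ker A_G)^\perp \;=\; \mathrm{rowspace}(A_G),
\]
the \emph{cut space} of $G$, independently of which cycle basis $C$ was chosen. Concretely, $p \in \ker C$ if and only if $p_e = y_v - y_u$ for every edge $e$ oriented from $u$ to $v$ and some potential $y \in \R^V$; that is, $\ker C$ is exactly the space of $\R$-tensions of $G$, mirroring the identification of $\ker A_G$ with $\R$-flows used in Lemma~\ref{lem:TotCyc}.

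Second, I would record the local constancy of signs: each coordinate function $p \mapsto p_e$ is continuous and nonvanishing on $\ker C \setminus \{ p : p_e = 0 \text{ for some } e \in E \}$, hence of constant sign on each connected component. So every component has a well-defined full-support sign vector $s \in \{-,+\}^E$ and is contained in the set $R_s := \ker C \cap \{ p : s_e p_e > 0 \text{ for all } e \in E\}$ of tensions with that sign pattern. Conversely, each nonempty $R_s$ is the intersection of a linear subspace with an open orthant, hence convex and therefore connected, and it lies entirely in the complement of the coordinate hyperplanes. Thus the connected components are precisely the nonempty sets $R_s$, one for each realizable full-support sign vector $s$.

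Third, I would match realizable sign vectors with acyclic reorientations via $s \mapsto \sigma(s) := \{ e \in E : s_e = -\}$, which is a bijection from $\{-,+\}^E$ onto reorientations of $G$. The claim is that $R_s \neq \emptyset$ if and only if ${}_{\sigma(s)}G$ is acyclic. Reorienting a tension $p$ with sign vector $s$ along $\sigma(s)$ flips $p_e$ on precisely the edges of $\sigma(s)$ and hence produces a strictly positive tension of ${}_{\sigma(s)}G$, i.e.\ a potential $y$ with $y_v > y_u$ on every edge $u \to v$ of ${}_{\sigma(s)}G$; summing these inequalities around a hypothetical directed cycle telescopes to $0 > 0$, so ${}_{\sigma(s)}G$ has no directed cycle. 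Conversely, if ${}_{\sigma(s)}G$ is acyclic, a topological order supplies such a $y$, whose coboundary is a strictly positive tension of ${}_{\sigma(s)}G$ and hence, after undoing the reorientation, a nowhere-zero tension of $G$ with sign vector $s$, i.e.\ an element of $R_s$. Chaining the three steps yields the asserted bijection, the acyclic reorientation attached to a component being $\sigma(s)$ for its sign vector $s$, equivalently $\sigma = \{ e : p_e < 0\}$ for any $p$ in that component, exactly parallel to the last sentence of Lemma~\ref{lem:TotCyc}.

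I do not anticipate a genuine obstacle; the argument is the verbatim transpose of the proof of Lemma~\ref{lem:TotCyc}. The one place to be slightly careful is that one should not try to deduce the statement by quoting Lemma~\ref{lem:TotCyc} for a ``dual graph'': the cographic matroid of $G$ need not be graphic, so the equivalence ``$R_s \neq \emptyset$ exactly when ${}_{\sigma(s)}G$ is acyclic'' has to be established directly, which is why I would include the short potential-theoretic telescoping argument above rather than appeal to a structural description of the cut space.
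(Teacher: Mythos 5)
Your proof is correct. Note, though, that the paper does not prove this lemma at all---it is quoted from Greene--Zaslavsky \cite[Lem.~7.1]{GZ83}, just like its flow counterpart Lemma~\ref{lem:TotCyc}---so there is no in-paper argument to compare against; what you have supplied is a self-contained replacement for the citation. The three steps are all sound: the identification $\ker C = (\ker A_G)^\perp = \mathrm{rowspace}(A_G)$ (valid for any cycle basis, since its rows span the cycle space), the standard observation that the components of a linear subspace minus the coordinate hyperplanes are exactly the nonempty open-orthant slices $R_s$, and the equivalence ``$R_s\neq\emptyset$ iff ${}_{\sigma(s)}G$ is acyclic'' via strictly monotone potentials and topological orderings. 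Your closing caveat is well taken---the cographic matroid of $G$ need not be graphic, so one cannot simply invoke Lemma~\ref{lem:TotCyc} for a dual graph---and the degenerate cases (loops force every $R_s$ to be empty and simultaneously preclude acyclic reorientations; coloops contribute a zero column of $C$ and an unconstrained coordinate) are consistent with your bijection. The only cosmetic point is a sign-convention mismatch with the paper's definition $t(uv)=c(u)-c(v)$, which negates the potential but changes nothing in the argument.
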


Tensions can be parametrized by the edges in a spanning forest, leaving it to
the non-forest edges $\Tc$ to compensate along each cycle in the cycle basis
$C$. This yields the description of an inside-out polytope analogous to the
flow case in Section~\ref{sec:FlowsInsideOut}.

\section{An interpretation of the Tutte polynomial}
\label{sec:TutteInterpretation}

An (integral) \emph{generalized Tutte--Grothendieck (T-G) invariant} is an
assignment $f_G \in \Z$ to every graph $G$ such that for some constants
$\tau_f, \sigma_f \in \Z$
\begin{enumerate}
    \item $f_G = \sigma_f f_{G_{\setminus e}} + \tau_f f_{G_{/ e}}$ for every
        $e \in E$ that is neither a loop nor a coloop, and
    \item $f_G = f_{G[e]} \cdot f_{G_{\setminus e}}$ otherwise.
\end{enumerate}

A large collection of important invariants for graphs (or more generally
matroids) qualifies as Tutte--Grothendieck invariants (cf.~\cite{BO92}), among
them all evaluations of chromatic and modular flow and tension polynomials.
The \emph{Tutte polynomial} $t_G(x,y) \in \Z[x,y]$ of a graph $G$ is the
unique function such that every generalized T-G invariant $f_G$ can be
expressed as 
\[
f_G = \sigma^{\xi(G)}_f \tau^{|V|-c(G)}_f
t_G(\tfrac{f_I}{\tau_f},\tfrac{f_L}{\sigma_f})
\]
where $f_L$ and $f_I$ is the invariant of a loop and coloop, respectively.

Insofar the Tutte polynomial expresses a multitude of enumerative invariants,
but, to the best of our knowledge, there is no enumerative interpretation for
arbitrary evaluations of the Tutte polynomial.  
The
reciprocity statements of Sections~\ref{sec:Flows} and \ref{sec:Tensions} yield
a natural interpretation of what the Tutte polynomial evaluated at $(1+\l,1+k)$
for positive $k,\l$ counts.

\begin{thm} \label{thm:TutteInterpretation}
    Let $G$ be a graph and $k,\l$ two positive integers. Then $t_G(1+\l,1+k)$
    counts triples $(f,t,\s)$ where     
    \begin{enumerate}[i)]
        \item $f$ is a $\Zk$-flow and $t$ is a $\Zl$-tension on $G$,
        \item $f$ and $t$ have disjoint support, and
        \item $\s \subseteq E\setminus\supp(f)\cup\supp(t)$ is a reorientation of $G_{\setminus \supp(f) \cup
            \supp(t)}$.
    \end{enumerate}
\end{thm}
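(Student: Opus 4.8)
The plan is to derive Theorem~\ref{thm:TutteInterpretation} by expressing both sides of the desired identity in terms of the reciprocity statements for the modular flow and tension polynomials, and then matching the enumerative data. First I would recall the universal formula for a generalized Tutte--Grothendieck invariant: any such $f_G$ satisfies $f_G = \sigma_f^{\xi(G)}\tau_f^{|V|-c(G)}\,t_G(f_I/\tau_f, f_L/\sigma_f)$. The strategy is to build a T-G invariant whose associated parameters make the substitution point equal to $(1+\l,1+k)$, and whose value is visibly the count of triples $(f,t,\s)$ described in the statement.

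The natural candidate is the invariant $F_G := (-1)^{\xi(G)+|V|-c(G)}\,\mFlow_G(-k)\,\mTen_G(-\l)$, up to a sign bookkeeping factor. Since $(-1)^{\xi(G)}\mFlow_G(-k)$ and $(-1)^{|V|-c(G)}\mTen_G(-\l)$ are both T-G invariants (being evaluations of the respective polynomials, which satisfy deletion--contraction as recorded in Proposition~\ref{prop:FlowRecursion} and its tension analogue), so is their product. I would compute the relevant constants: for the flow side the loop value is $(k-1)\cdot(-1) = -(k-1)$ wait — more carefully, I need the loop and coloop evaluations of each factor at the negative argument and the deletion/contraction coefficients $\sigma,\tau$. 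Running this through the universal formula should show that $F_G$, evaluated via the Tutte polynomial, lands precisely at the argument $(1+\l,1+k)$ (the ``$+1$'' shifts arising exactly because the reciprocity introduces the extra ``reorient the support-complement'' factor, which for a loop or coloop contributes a factor of $2$ minus the non-reoriented count — i.e.\ the free reorientation of each non-special edge is what turns $k$ into $k+1$ and $\l$ into $\l+1$).

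For the enumerative side, I would combine Theorem~\ref{thm:MainFlow} and Theorem~\ref{thm:TensionReciprocity}. By Theorem~\ref{thm:MainFlow}, $(-1)^{\xi(G)}\mFlow_G(-k)$ counts pairs $(f,\s_1)$ with $f$ a $\Zk$-flow and $\s_1$ a totally cyclic reorientation of $G_{/\supp(f)}$; by Theorem~\ref{thm:TensionReciprocity}, $(-1)^{|V|-c(G)}\mTen_G(-\l)$ counts pairs $(t,\s_2)$ with $t$ a $\Zl$-tension and $\s_2$ an acyclic reorientation of $G_{\setminus\supp(t)}$. Taking the product counts quadruples $(f,\s_1,t,\s_2)$ with no interaction between $f$ and $t$. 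The crucial combinatorial step is to reorganize these quadruples, when $\supp(f)$ and $\supp(t)$ are forced to be disjoint, into the triples of the theorem: the reorientation data $\s_1$ on $G_{/\supp(f)}$ being totally cyclic and $\s_2$ on $G_{\setminus\supp(t)}$ being acyclic should, on the subgraph $G_{\setminus(\supp(f)\cup\supp(t))}$, together with an \emph{arbitrary} reorientation $\s$ there, repackage bijectively. This uses the standard matroid fact that an orientation of a graph (matroid) decomposes uniquely into a totally cyclic part on the cycle-span and an acyclic part on the cocycle-span; applied here, a totally cyclic reorientation of the contraction and an acyclic reorientation of the deletion restrict and glue to an unconstrained reorientation of the common minor, while the disjointness of $\supp(f)$ and $\supp(t)$ is exactly what forces compatibility.

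I expect the main obstacle to be two-fold. First, pinning down the sign conventions and the precise values of $\sigma_f,\tau_f,f_L,f_I$ for the product invariant so that the universal T-G formula genuinely outputs $t_G(1+\l,1+k)$ rather than some nearby point — this is a finicky but finite computation, and the cleanest route may be to verify it on the generators (a single loop, a single coloop) where both sides can be evaluated by hand. Second, and more substantively, justifying the bijection in the previous paragraph: one must check that the condition ``$\s\subseteq E\setminus(\supp(f)\cup\supp(t))$ arbitrary'' together with the flow $f$ and tension $t$ encodes exactly the same information as the pair of constrained reorientations $(\s_1,\s_2)$, and in particular that disjointness of supports is automatic from the product structure (equivalently, that a flow and a tension with overlapping support cannot coexist in the relevant chamber decomposition — which follows from orthogonality of the flow and tension lattices). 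Given the matroid-duality dictionary already displayed in Section~\ref{sec:Tensions}, I would phrase this step in oriented-matroid language to make the totally-cyclic/acyclic decomposition transparent. An alternative, and perhaps safer, route is to avoid the bijection entirely and instead prove the identity purely by deletion--contraction: verify directly that the triple-count satisfies the T-G recursions with the claimed constants and agrees with $t_G(1+\l,1+k)$ on loops and coloops; the appendices of the paper already promise such deletion--contraction proofs, so I would present the conceptual bijective argument in the main text and relegate the recursion verification to the appendix.
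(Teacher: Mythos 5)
Your main proposed route has a genuine gap. The product of two Tutte--Grothendieck invariants is not a Tutte--Grothendieck invariant (the recursion $f_G=\sigma f_{G\setminus e}+\tau f_{G/e}$ is not preserved under multiplication), and correspondingly the quantity you build,
\[
(-1)^{\xi(G)}\mFlow_G(-k)\cdot(-1)^{|V|-c(G)}\mTen_G(-\l)\;=\;t_G(0,1+k)\cdot t_G(1+\l,0),
\]
is simply not equal to $t_G(1+\l,1+k)$. A single loop already kills it: there $t_G(1+\l,1+k)=1+k$, but $\mTen_G\equiv 0$ (the cycle condition forces $t(e)=0$ on a loop and the loop admits no acyclic reorientation), so your product is $0$. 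Relatedly, your claim that disjointness of $\supp(f)$ and $\supp(t)$ is ``automatic from the product structure'' is false: orthogonality of the flow and tension \emph{lattices} does not prevent an individual flow and an individual tension from having overlapping supports, and the quadruple count $(f,\s_1,t,\s_2)$ on the full graph genuinely differs from the triple count in the theorem.

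What is missing is a sum over edge sets rather than a single product. The paper's proof assigns to each triple $(f,t,\s)$ a \emph{unique} $S\subseteq E$ with $\supp(f)\subseteq S$ and $\supp(t)\subseteq E\setminus S$, such that $\s\cap S$ is totally cyclic on $(G[S])_{/\supp(f)}$ and $\s\setminus S$ is acyclic on $(G_{/S})_{\setminus\supp(t)}$; uniqueness comes from Lemma~\ref{lem:AcyclicTotCyclic} (the totally-cyclic/acyclic decomposition you correctly invoke, but applied to $_\s G_{/\supp(f)\setminus\supp(t)}$, not to glue reorientations on the full graph). The two reciprocity theorems then count the triples with a given $S$ as $t_{G[S]}(0,1+k)\,t_{G_{/S}}(1+\l,0)$, and the final step is the Kook--Reiner--Stanton convolution formula $t_G(1+\l,1+k)=\sum_{S\subseteq E}t_{G[S]}(0,1+k)\,t_{G_{/S}}(1+\l,0)$, which is the nontrivial identity your product ansatz was trying to shortcut. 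Your fallback suggestion --- verify directly that the triple count satisfies the T--G recursion with $\sigma=\tau=1$ and values $1+k$, $1+\l$ on loops and coloops --- is sound and is exactly the paper's Appendix~\ref{sec:CombTutteProof}; if you pursue that route instead, the proof goes through.
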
 

The polynomial  $t_G(1+\l,1+k)$ is also known as the rank polynomial of $G$.

In Appendix~\ref{sec:CombTutteProof}, we give a proof of
Theorem~\ref{thm:TutteInterpretation} from first principles, i.e.~we prove
that the stated cardinality itself is a generalized T-G invariant with
structure constants $\tau = \sigma = 1$ and values $1+k$ and $1+\l$ for loops
and coloops, respectively.  Here, however, we give a proof by noting that
Theorem~\ref{thm:TutteInterpretation} is equivalent to the convolution formula
for Tutte polynomials of Kook, Reiner, and Stanton \cite{KRS99} specialized to
graphs.

We need the following observation regarding reorientations of graphs.

\begin{lem}\label{lem:AcyclicTotCyclic}
  Let $G = (V,E)$ be an oriented graph. Then there is a unique $S \subseteq E$
  such that $G[S]$ is totally cyclic and $G_{/ S}$ is acyclic.
\end{lem}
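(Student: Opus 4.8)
The plan is to prove Lemma~\ref{lem:AcyclicTotCyclic} by characterizing the set $S$ intrinsically and then verifying existence and uniqueness separately. The natural candidate for $S$ is the union of all edge sets of directed cycles of $G$; equivalently, $S = \{ e \in E : e \text{ lies on some directed cycle of } G \}$. With this definition, $G[S]$ is totally cyclic essentially by construction: every edge of $S$ lies on a directed cycle of $G$, and one needs to check that such a cycle stays within $S$ — but this is immediate, since every edge of a directed cycle is itself on a directed cycle, hence in $S$. The content of the lemma is then that this particular $S$ also makes the contraction $G_{/S}$ acyclic, and that it is the \emph{only} set with both properties.

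For the acyclicity of $G_{/S}$, I would argue by contradiction: a directed cycle in $G_{/S}$ lifts to a directed closed walk in $G$ that uses some edges of $E \setminus S$ together with (contracted) vertices coming from $S$. From such a closed walk one extracts a directed cycle in $G$ passing through at least one edge $e \in E \setminus S$; this contradicts the definition of $S$ as containing every edge on a directed cycle. The only mild subtlety here is bookkeeping: after contracting $S$, a directed path in $G_{/S}$ between two endpoints of an edge $e \notin S$ corresponds, inside $G$, to a directed path all of whose intermediate traversals through contracted vertices can be rerouted through directed cycles of $G[S]$ — one has to note that within a totally cyclic graph any vertex can be reached from any other vertex along a directed walk, restricted to each connected component of $G[S]$. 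This reachability fact for totally cyclic graphs is the one genuinely graph-theoretic input, and I expect it to be the main obstacle; it follows from the standard fact that in a totally cyclic (i.e.\ every edge on a directed cycle) graph, every weakly connected component is strongly connected.

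For uniqueness, suppose $S'$ is another set with $G[S']$ totally cyclic and $G_{/S'}$ acyclic. Since $G[S']$ is totally cyclic, every edge of $S'$ lies on a directed cycle of $G[S'] \subseteq G$, hence $S' \subseteq S$. For the reverse inclusion, take $e \in S$; then $e$ lies on a directed cycle $K$ of $G$. If some edge of $K$ were outside $S'$, then $K$ would descend to a nontrivial directed closed walk in $G_{/S'}$ (the edges of $K \cap S'$ get contracted, the rest survive), yielding a directed cycle in $G_{/S'}$, contradicting acyclicity. Hence $K \subseteq S'$, so $e \in S'$, giving $S \subseteq S'$ and thus $S = S'$.

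Finally, to package this cleanly, I would state up front the elementary equivalence ``$G$ is totally cyclic $\iff$ every weakly connected component of $G$ is strongly connected'' (or cite it), prove the reachability claim once, and then the existence argument for acyclicity of $G_{/S}$ and both inclusions in the uniqueness argument become short. An alternative, more matroid-flavored route would be to invoke Lemma~\ref{lem:TotCyc} together with the cycle-space/cocycle-space orthogonal decomposition — the totally cyclic part corresponds to the support of some element of $\ker A_G$ and the acyclic part to its complement via a conformal decomposition of $\mathbbm{1}$ — but the direct combinatorial argument above is self-contained and I would prefer it here.
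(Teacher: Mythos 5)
Your proposal is correct and follows essentially the same route as the paper: take $S$ to be the set of edges lying on a directed cycle, use componentwise strong connectivity of the totally cyclic part $G[S]$ to get acyclicity of $G_{/S}$, and derive uniqueness from the observation that a directed cycle meeting $E\setminus S'$ would survive contraction of $S'$. The paper's version is just a terser rendering of the same argument; your extra care with lifting closed walks through contracted components and extracting a cycle through a specified edge fills in details the paper leaves implicit.
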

\begin{proof}
  Let $S \subseteq E$ be the collection of edges that lie on a directed cycle
  in $G$. Then, clearly, $G[S]$ is totally cyclic and, as $G[S]$ is
  componentwise strongly connected, $G_{/ S}$ is acyclic. As for uniqueness,
  suppose that $S^\prime \subseteq E$ has the same properties. As $G[S']$ is
  totally cyclic, clearly $S'\subset S$. Now if $e \in S \setminus S^\prime$,
  then $e$ is contained in a directed cycle which remains true in $G_{/
  S^\prime}$, in contradiction to $G_{/ S^\prime}$ being acyclic. Hence
  $S=S'$.
\end{proof}

\begin{proof}[Proof of Theorem~\ref{thm:TutteInterpretation}]
    Consider the collection of triples $(f,t,\s)$ as in the theorem. For every
    triple, we claim that there is a unique $S \subseteq E$ such that 
    \begin{compactenum}
    \item $f$ is a $\Zk$-flow on $G[S]$ and $\sigma \cap S$ is a totally
        cyclic reorientation of $(G[S])_{/ \supp(f)}$, and
    \item $t$ is a $\Zl$-tension on $G_{/ S}$ such that $\sigma \setminus S$
        is an acyclic reorientation of $(G_{/ S})_{\setminus \supp(t)}$.
    \end{compactenum}
    Indeed, let $S^\prime \subseteq E$ be the set for $_\s
    G_{/\supp(f)\setminus\supp(t)}$ whose existence and uniqueness is asserted
    by Lemma~\ref{lem:AcyclicTotCyclic}. Now, it is easy to verify that $S :=
    S^\prime \cup \supp(f)$ is the unique set with the properties above.

    In light of Theorem \ref{thm:MainFlow} and Theorem
    \ref{thm:TensionReciprocity}, the number of triples for which $S$ is the
    unique set obeying the above properties is
    \[
    (-1)^{\xi(G[S])}\mFlow_{G[S]}(-k) \cdot (-1)^{|V| - c(G_{/ S})} \mTen_{G_{/
    S}}(-\l)
    \]
    The flow and tension polynomials are both specializations of the Tutte
    polynomial and hence
    \begin{align*}
        t_{G[S]}(0,1+k) &= (-1)^{\xi(G[S])}\mFlow_{G[S]}(-k)\\
        t_{G_{/ S}}(1+\l,0) &= (-1)^{|V| - c(G_{/ S})} \mTen_{G_{/S}}(-\l).
    \end{align*}
    To finish the proof, we recall the result of Kook, Reiner, and Stanton
    \cite[Thm.~1]{KRS99}\note{fix ref} stating that
    \[
        t_G(1+\l,1+k) = \sum_{S \subseteq E} t_{G[S]}(0,1+k)\,t_{G_{/S}}(1+\l,0).
    \]
\end{proof}

This interpretation yields the following counting formula which matches a
result of Reiner~\cite{reiner99} but removes the restriction to prime powers.
It follows from Theorem~\ref{thm:TutteInterpretation} together with the fact
that every flow or tension is nowhere zero restricted to its support.

\begin{cor}
    Let $G = (V,E)$ be a graph, then the Tutte polynomial is given by
    \[
    t_G(1+\l,1+k) = \sum_{S \subseteq T \subseteq E}
    2^{|T \setminus S|} \mFlow_{G[S]}(k) \mTen_{G_{/T}}(\l)
    \]
\end{cor}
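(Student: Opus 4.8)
The plan is to obtain the formula directly from Theorem~\ref{thm:TutteInterpretation} by re-indexing the triples it counts according to the supports of the flow and the tension. By that theorem, $t_G(1+\l,1+k)$ is the number of triples $(f,t,\s)$ with $f$ a $\Zk$-flow on $G$, $t$ a $\Zl$-tension on $G$, $\supp(f)\cap\supp(t)=\emptyset$, and $\s\subseteq E\setminus(\supp(f)\cup\supp(t))$ an arbitrary reorientation. First I would put $S:=\supp(f)$ and $T:=E\setminus\supp(t)$. The disjointness condition is then precisely $S\subseteq T$, and every pair $S\subseteq T\subseteq E$ arises this way; so the count splits as a sum over such pairs of the number of triples with $\supp(f)=S$ and $\supp(t)=E\setminus T$.

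Next I would evaluate this inner count as a product of three independent factors. For the reorientations this is immediate: since $S\subseteq T$ we have $E\setminus(\supp(f)\cup\supp(t))=E\setminus\bigl(S\cup(E\setminus T)\bigr)=T\setminus S$, so there are exactly $2^{|T\setminus S|}$ admissible $\s$, regardless of $f$ and $t$. For the flows I would note that a map $E\rar\Zk$ that vanishes off $S$ is a $\Zk$-flow on $G$ if and only if its restriction to $S$ is a $\Zk$-flow on $G[S]$, because the conservation equation at each vertex only involves the incident edges lying in $S$; hence $f\mapsto f|_S$ is a bijection onto the nowhere-zero $\Zk$-flows of $G[S]$, of which there are $\mFlow_{G[S]}(k)$. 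Dually, a $\Zl$-tension of $G$ that vanishes on $T$ descends, in the difference-of-colors description, to a $\Zl$-tension of $G_{/T}$, and restriction to $E\setminus T$ identifies these two sets; under this identification ``nowhere-zero off $T$'' becomes ``nowhere-zero on $G_{/T}$'', so the $\Zl$-tensions of $G$ with support exactly $E\setminus T$ number $\mTen_{G_{/T}}(\l)$. Multiplying the three factors and summing over all pairs $S\subseteq T\subseteq E$ then gives the asserted identity.

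The one step that is not pure bookkeeping is the pair of restriction/contraction bijections for flows and for tensions --- this is exactly the remark in the statement that a flow or tension is nowhere-zero when restricted to its support, together with the identifications of flows of $G$ supported in $S$ with flows of $G[S]$, and of $\Zl$-tensions of $G$ vanishing on $T$ with $\Zl$-tensions of $G_{/T}$. The flow side is immediate from the conservation equations. For the tension side I would either invoke the coloring description (a tension vanishes on $T$ exactly when the colorings inducing it are constant on the components of $G[T]$, i.e.\ factor through $G_{/T}$) or, equivalently, the standard fact that the $\Zl$-tensions of $G_{/T}$ are precisely the restrictions to $E\setminus T$ of the $\Zl$-tensions of $G$ that vanish on the edges of $T$. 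Either way this is routine; the substantive input is Theorem~\ref{thm:TutteInterpretation}.
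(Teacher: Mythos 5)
Your proof is correct and takes exactly the route the paper intends: re-index the triples of Theorem~\ref{thm:TutteInterpretation} by $S=\supp(f)$ and $T=E\setminus\supp(t)$, count the $2^{|T\setminus S|}$ reorientations, and use that a flow (resp.\ tension) is nowhere-zero on its support, so the flows with support exactly $S$ number $\mFlow_{G[S]}(k)$ and the tensions with support exactly $E\setminus T$ number $\mTen_{G_{/T}}(\l)$. The paper gives only a one-sentence justification of this corollary, and your write-up supplies the same bookkeeping in full detail.
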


Let us remark that enumerative interpretations for \emph{all} evaluations of
the Tutte polynomial are not to be expected as for negative parameters the
sign of $t_G(x,y)$ depends on the magnitude of the arguments. However, our
interpretation misses some fundamental evaluations such as $t_G(1,2)$ for the
number of spanning sets and $t_G(2,1)$ for the number of independent sets.  We
also remark that interpretations for evaluations of the flow- and tension
polynomials at negative values also yield \emph{interpretations} in the spirit
of \cite[Cor.~2]{reiner99} for the missing two orthants $(1-\l,1+k)$ and
$(1+\l,1-k)$.

\appendix

\section{Combinatorial Proof of Modular Flow Reciprocity}
\label{sec:comb-proof-for-flows}

In this section we give a combinatorial proof of Theorem~\ref{thm:MainFlow}.
Our approach is straightforward: we show that $(-1)^{\xi(G)}\mFlow_G(-k)$ is a
generalized Tutte--Grothendieck invariant with the correct structure
constants.

Let $\rmFlowSet_G(k)$ denote the set of all pairs $(f,\sigma)$ of a $\Zk$-flow
$f$ and a totally cyclic reorientation $\sigma$ of $G_{/\supp(f)}$. Using this
notation Theorem~\ref{thm:MainFlow} simply states
\begin{eqnarray}
\label{eqn:FlowReciprocity}
(-1)^{\xi(G)}\mFlow_G(-k)=\rmFlow_G(k).
\end{eqnarray}
In light of Proposition~\ref{prop:FlowRecursion} it suffices to show that
$\rmFlow_G(k)$ is a Tutte--Grothendieck invariant with the structure constants as
given in the following theorem.

\begin{thm}
\label{thm:rmFlowRecursion}
Let $G=(V,E)$ be an oriented graph and let $k\in\N$.
\begin{enumerate}
\item If $E=\emptyset$, then $\rmFlow_G(k)=1$.
\item If $e\in E$ is a coloop, then $\rmFlow_G(k)=0$.
\item If $e\in E$ is a loop, then $\rmFlow_G(k)=(k+1)\cdot\rmFlow_{G\setminus
    e}(k)$.
\item If $e\in E$ is neither a loop nor a coloop, then $\rmFlow_G(k)=\rmFlow_{G\setminus
    e}(k)+\rmFlow_{G/e}(k)$.
\end{enumerate}
\end{thm}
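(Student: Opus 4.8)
The plan is to verify the four structure-constant identities of Theorem~\ref{thm:rmFlowRecursion} directly, by manipulating the set $\rmFlowSet_G(k)$ of pairs $(f,\sigma)$. Recall $f$ is a $\Zk$-flow (not necessarily nowhere-zero) and $\sigma \subseteq E \setminus \supp(f)$ is a totally cyclic reorientation of $G_{/\supp(f)}$. The first two items are immediate: if $E = \emptyset$ there is a unique (empty) flow and a unique (empty) reorientation, so $\rmFlow_G(k) = 1$; if $e$ is a coloop, then for \emph{any} flow $f$ we have $f_e = 0$ (a coloop carries no flow), so $e \notin \supp(f)$, and $e$ remains a coloop in $G_{/\supp(f)}$, hence lies on no directed cycle of any reorientation --- so there are no valid pairs and $\rmFlow_G(k) = 0$.

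For item (3), suppose $e$ is a loop. A loop imposes no flow-conservation constraint, so a $\Zk$-flow on $G$ is the same as a $\Zk$-flow $f'$ on $G_{\setminus e}$ together with an arbitrary value $f_e \in \Zk$. Split on whether $f_e = 0$. If $f_e \neq 0$ (there are $k-1$ choices), then $e \in \supp(f)$, contracting it just deletes the loop, and $\sigma \subseteq E \setminus e \setminus \supp(f')$ must be a totally cyclic reorientation of $(G_{\setminus e})_{/\supp(f')}$ --- giving $(k-1)\rmFlow_{G\setminus e}(k)$ pairs. If $f_e = 0$, then $e \notin \supp(f)$ and $e$ is still a loop in $G_{/\supp(f)}$; a loop is its own directed cycle in both orientations, so whether or not $e \in \sigma$ is a free binary choice, and the rest of $\sigma$ is a totally cyclic reorientation of $(G_{\setminus e})_{/\supp(f')}$ --- giving $2\,\rmFlow_{G\setminus e}(k)$ pairs. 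Here I should check carefully that reorienting the loop $e$ (or not) does not interact with total cyclicity on the other edges, which is clear since a loop is strongly connected on its own. Summing: $(k-1)\rmFlow_{G\setminus e}(k) + 2\rmFlow_{G\setminus e}(k) = (k+1)\rmFlow_{G\setminus e}(k)$, as claimed.

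Item (4) is the crux. Let $e$ be neither a loop nor a coloop. Given a pair $(f,\sigma) \in \rmFlowSet_G(k)$, again split on whether $e \in \supp(f)$. \textbf{Case $e \in \supp(f)$.} I want to biject with $\rmFlowSet_{G/e}(k)$: restricting $f$ to $E \setminus e$ gives a $\Zk$-flow $\bar f$ on $G_{/e}$ (contraction preserves flows; since $e$ is not a coloop, $G_{/e}$ has no new issues), with $\supp(\bar f) = \supp(f) \setminus e$, and $(G_{/e})_{/\supp(\bar f)} = G_{/\supp(f)}$; the reorientation $\sigma$ carries over verbatim. Conversely a $\Zk$-flow on $G_{/e}$ extends uniquely to a $\Zk$-flow on $G$ with prescribed nonzero value on $e$ --- but here I must be careful: the extension need not be unique, and $f_e$ need not be nonzero. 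The honest statement is that flows on $G$ with $e \in \supp(f)$ do \emph{not} biject naively with flows on $G/e$; rather one needs the deletion-contraction for $\mFlow$ itself (Proposition~\ref{prop:FlowRecursion}). \textbf{Case $e \notin \supp(f)$.} Then $f$ is a flow on $G_{\setminus e}$ too, and $e$ is an edge of $G_{/\supp(f)}$ which is neither a loop (it wasn't in $G$, and contracting other edges can't create a loop at $e$ unless $e$'s endpoints get identified — possible! so care needed) nor... — this is exactly where the real work lies.

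The main obstacle is item (4): I expect to handle it \emph{not} by a direct bijection but by the same mechanism used in the body of the paper, namely to use that $\rmFlow_G(k) = (-1)^{\xi(G)}\mFlow_G(-k)$ is \emph{already known} to equal a sum of (open) Ehrhart polynomials, OR — since this appendix is meant to be self-contained — to carefully set up a sign-reversing/term-matching argument. The cleanest self-contained route: show that the map $(f,\sigma) \mapsto (f,\sigma)$ partitioning $\rmFlowSet_G(k)$ according to $e \in \supp(f)$ or not, combined with the observation that $e \notin \supp(f)$ pairs split further according to whether $e$ becomes a loop, a coloop, or an ordinary edge of $G_{/\supp(f)}$, recovers $\rmFlow_{G\setminus e}(k) + \rmFlow_{G/e}(k)$ after matching terms against the recursion $\mFlow_G = \mFlow_{G/e} - \mFlow_{G\setminus e}$ applied inside each support class. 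The delicate point throughout is tracking how contraction of the support can turn the tracked edge $e$ into a loop or coloop, and ensuring the counts of totally cyclic reorientations transform compatibly; I would organize this as a lemma computing $\rmFlow$ restricted to pairs with a fixed support $S$ in terms of $\mFlow_{G[S]}$ and the number of totally cyclic reorientations of $G_{/S}$ on $E \setminus S$, and then run deletion-contraction on that cleaner quantity.
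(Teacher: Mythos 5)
Your items (1)--(3) are correct and essentially identical to the paper's argument (the paper phrases (3) as a surjection with fibers of size $k+1$, which is your $(k-1)+2$ count). The problem is item (4): you never actually prove it. You sketch a bijection, correctly identify where it breaks, and then defer the repair to an unexecuted ``sign-reversing/term-matching argument'' or to the identity $\rmFlow_G(k)=(-1)^{\xi(G)}\mFlow_G(-k)$ --- but that identity is precisely what this appendix is supposed to establish independently, so invoking it here is circular.

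The concrete ideas you are missing are the two lifting lemmas that make a direct, unsigned count work. First, your worry that ``the extension need not be unique'' is unfounded: for $e$ neither a loop nor a coloop, every $\Zk$-flow $f'$ on $G_{/e}$ extends to \emph{exactly one} $\Zk$-flow $f$ on $G$, because conservation at the two endpoints of $e$ forces $f(e)$ (which may well be $0$ --- that is allowed, since $\rmFlowSet_G(k)$ counts all flows, not just nowhere-zero ones). Second, a totally cyclic reorientation $\sigma'$ of $G_{/e}$ lifts to a totally cyclic reorientation of $G$ for \emph{at least one} of the two orientations of $e$, and for \emph{both} exactly when $\sigma'$ restricted to $G_{\setminus e}$ is already totally cyclic. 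Combining these, the restriction map $(f,\sigma)\mapsto(f|_{E\setminus e},\sigma\cap(E\setminus e))$ from $\rmFlowSet_G(k)$ to $\rmFlowSet_{G/e}(k)$ is surjective with every fiber of size $1$ or $2$, and the doubly-covered elements map two-to-one onto $\rmFlowSet_{G\setminus e}(k)$; this yields $\rmFlow_G(k)=\rmFlow_{G/e}(k)+\rmFlow_{G\setminus e}(k)$ with no cancellation and no case analysis on how $e$ behaves inside $G_{/\supp(f)}$. Your proposed reorganization via a lemma computing the count for fixed support $S$ in terms of $\mFlow_{G[S]}$ and totally cyclic reorientations of $G_{/S}$ could in principle be made to work, but it reintroduces the signed recursion $\mFlow_G=\mFlow_{G/e}-\mFlow_{G\setminus e}$ and the bookkeeping of $e$ becoming a loop or coloop after contracting $S$ --- exactly the complications the fiber-counting argument avoids.
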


To show this theorem we examine how a $\Zk$-flow on $G$ induces
$\Zk$-flows on $G_{/e}$ and $G_{\setminus e}$, respectively, and how a
totally cyclic reorientation of $G$ induces totally cyclic reorientations of
$G_{/e}$ and $G_{\setminus e}$, respectively. We first turn our attention to the
$\Zk$-flows.

\begin{lem} 
\label{lem:flow-projection}
Let $G=(V,E)$ be an oriented graph and $e\in E$ neither a loop nor a coloop. If
$f$ is a $\Zk$-flow on $G$, then $f|_{E\setminus e}$ is
\begin{enumerate}
\item a $\Zk$-flow on $G_{/e}$ and
\item a $\Zk$-flow on $G_{\setminus e}$ if and only if $f(e)=0$.
\end{enumerate}
\end{lem}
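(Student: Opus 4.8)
The plan is to check the flow-conservation equations directly, which makes both parts almost immediate. Write $e = uv$ for the edge, oriented from $u$ to $v$; since $e$ is not a loop, $u \neq v$. For a vertex $x$ and a set of edges $F \subseteq E$ put $\beta_F(x) := \sum_{yx \in F} f(yx) - \sum_{xy \in F} f(xy)$, the net amount of $f$ flowing into $x$ along edges of $F$. By definition $f$ is a $\Zk$-flow on $G$ exactly when $\beta_E(x) = 0$ in $\Zk$ for every $x \in V$. The single edge $e$ contributes $-f(e)$ to $\beta_E(u)$, contributes $+f(e)$ to $\beta_E(v)$, and contributes $0$ to $\beta_E(x)$ for every other vertex. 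Hence, writing $F := E \setminus e$, I get $\beta_F(x) = \beta_E(x) = 0$ for $x \notin \{u,v\}$, while $\beta_F(u) = \beta_E(u) + f(e) = f(e)$ and $\beta_F(v) = \beta_E(v) - f(e) = -f(e)$.

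For part (2) this is already enough: $f|_{E\setminus e}$ is a $\Zk$-flow on $G_{\setminus e}$ iff $\beta_F(x) = 0$ for all $x \in V$, and the only equations that are not automatic are those at $u$ and at $v$, which both read $f(e) = 0$ in $\Zk$. For part (1), observe that $G_{/e}$ is obtained by replacing $u$ and $v$ with a single vertex $w$ whose incident edges are exactly the edges of $F$ that were incident to $u$ or to $v$ in $G$; consequently the net flow of $f|_{E\setminus e}$ into $w$ equals $\beta_F(u) + \beta_F(v) = f(e) - f(e) = 0$, whereas at each vertex $x \neq w$ the incident edges are unchanged and the net flow remains $\beta_E(x) = 0$. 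So $f|_{E \setminus e}$ satisfies conservation at every vertex of $G_{/e}$, i.e.\ it is a $\Zk$-flow there.

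The one place that needs a moment's care, and the closest thing to an obstacle, is the bookkeeping for multiple edges and loops: an edge parallel to $e$ becomes a loop at $w$ in $G_{/e}$ and so contributes $0$ to the net flow at $w$, in agreement with the fact that it already contributed $0$ to $\beta_E(u) + \beta_E(v)$; similarly any loop of $G$ at $u$ or $v$ contributes $0$ on both sides. Beyond $u \neq v$, the hypothesis that $e$ is neither a loop nor a coloop is in fact not used in this lemma; it is stated only because $G_{/e}$ and $G_{\setminus e}$ are the graphs appearing in the deletion-contraction recursion of Proposition~\ref{prop:FlowRecursion}.
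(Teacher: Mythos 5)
Your proof is correct and is essentially the paper's own argument: a direct check of the conservation equation at each vertex, observing that only $u$ and $v$ are affected by removing $e$, that in $G_{/e}$ the two equations at $u$ and $v$ add to give conservation at the merged vertex, and that in $G_{\setminus e}$ each of them forces $f(e)=0$. Your closing remark is also accurate — only $u\neq v$ (not a loop) is needed, while the coloop hypothesis plays no role here.
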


\begin{proof}
  Let $e=uv$. At any vertex $w\not\in\{u,v\}$ the flow $(Af)_v$ does not change
  when passing from $G$ to $G_{/e}$ or $G_{\setminus e}$. In $G_{/e}$ the
  vertices $u$ and $v$ have been identified to form a vertex $u'$ and
  $(A_{G_{/e}}f|_{E\setminus e})_{u'}= (Af)_u + (Af)_v = 0$. In $G_{\setminus
    e}$ we have $(A_{G_{\setminus e}}f|_{E\setminus e})_{u'}= (Af)_u - f(e)$
  which is zero if and only if $f(e)=0$, and similarly for $v$.
\end{proof}

So a $\Zk$-flow on $G$ induces a $\Zk$-flow on $G_{/e}$ and if $f(e)=0$ it
also induces a $\Zk$-flow on $G_{\setminus e}$. Moreover it turns out that any
$\Zk$-flow on $G_{/e}$ is induced by a unique $\Zk$-flow on $G$ and the same
holds for $G_{\setminus e}$.

\begin{lem}
\label{lem:flow-lifting}
Let $G=(V,E)$ be an oriented graph and $e\in E$ neither a loop nor a coloop.
\begin{enumerate}
\item Given a $\Zk$-flow $f'$ on $G_{/e}$ there is a unique $\Zk$-flow $f$ on
  $G$ such that $f|_{E\setminus e}=f'$.
\item Given a $\Zk$-flow $f'$ on $G_{\setminus e}$ there is a unique $\Zk$-flow
  $f$ on $G$ such that $f|_{E\setminus e}=f'$. Moreover this flow has the
  property $f(e)=0$.
\end{enumerate}
\end{lem}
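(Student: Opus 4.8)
\textbf{Proof plan for Lemma~\ref{lem:flow-lifting}.}
The plan is to reconstruct the missing edge value from the flow-conservation constraint and then invoke Lemma~\ref{lem:flow-projection} to check that the reconstructed map is genuinely a $\Zk$-flow. For part (1), given a $\Zk$-flow $f'$ on $G_{/e}$ with $e = uv$, I would set $f|_{E\setminus e} := f'$ and define $f(e) \in \Zk$ to be whatever value is forced by flow conservation at $u$ in $G$; explicitly, $f(e)$ must equal $\sum_{wu \in E\setminus e} f'(wu) - \sum_{uw \in E\setminus e} f'(uw)$ (with the appropriate sign convention from the incidence matrix). This determines $f(e)$ uniquely, which immediately gives the uniqueness half of the statement. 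It remains to verify that the resulting $f$ is a $\Zk$-flow on $G$, i.e.\ that conservation also holds at $v$ and at all other vertices. Conservation at vertices $w \notin \{u,v\}$ is inherited verbatim from $f'$ since none of their incident edges is $e$. Conservation at $v$ follows because $(A_{G_{/e}} f')_{u'} = 0$ says exactly that $(A_G f)_u + (A_G f)_v = 0$ (by the identity in the proof of Lemma~\ref{lem:flow-projection}), and we chose $f(e)$ precisely to make $(A_G f)_u = 0$, hence $(A_G f)_v = 0$ as well.

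For part (2), given a $\Zk$-flow $f'$ on $G_{\setminus e}$, the deletion does not identify any vertices, so conservation of $f'$ already holds at $u$ and at $v$ separately within $G_{\setminus e}$; these are the same equations as conservation at $u$ and $v$ in $G$ except that the term $f(e)$ is absent. Therefore the unique way to extend $f'$ to a $\Zk$-flow $f$ on $G$ is to put $f(e) = 0$: any nonzero value would break conservation at $u$ (equivalently at $v$), and the value $0$ restores exactly the equations satisfied by $f'$. This simultaneously establishes existence, uniqueness, and the extra property $f(e) = 0$. One should note here that $e$ being neither a loop nor a coloop guarantees $u \neq v$, so the two endpoints give genuinely separate constraints, and that $G_{\setminus e}$ and $G_{/e}$ are honest graphs on which ``$\Zk$-flow'' makes sense.

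There is essentially no hard part; the only thing to be careful about is bookkeeping of orientations and signs in the incidence matrix when several parallel copies of $e$, or other edges between $u$ and $v$, are present, and making sure the ``identity'' $(A_{G_{/e}} f|_{E\setminus e})_{u'} = (A_G f)_u + (A_G f)_v$ from the proof of Lemma~\ref{lem:flow-projection} is applied with the right vertex labels. Once that identity is in hand, both parts are a one-line consequence of choosing $f(e)$ to kill the defect $(A_G f)_u$. Together with Lemma~\ref{lem:flow-projection}, this pair of lemmas says that restriction to $E \setminus e$ is a bijection from $\Zk$-flows on $G$ onto $\Zk$-flows on $G_{/e}$, and a bijection from $\{\Zk$-flows $f$ on $G$ with $f(e)=0\}$ onto $\Zk$-flows on $G_{\setminus e}$, which is the form in which it will be used in the proof of Theorem~\ref{thm:rmFlowRecursion}.
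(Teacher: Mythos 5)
Your proposal is correct and follows essentially the same route as the paper: in both parts the value $f(e)$ is forced by the conservation constraint at one endpoint, uniqueness is immediate, and the constraint at the other endpoint is verified via the identity $(A_{G_{/e}}f|_{E\setminus e})_{u'}=(A_Gf)_u+(A_Gf)_v$ (for contraction) or via the two separate zero constraints at $u$ and $v$ (for deletion). The paper packages this slightly more symmetrically by writing $(Af)_u=(A^*f)_u-f(e)$ and $(Af)_v=(A^*f)_v+f(e)$ with $A^*$ the incidence matrix minus the column of $e$, but the content is identical.
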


\begin{proof} 
  In both cases, we necessarily have $f(e')=f'(e')$ for all $e'\not=e$ and we
  have to check that there is unique choice for $f(e)$ that makes $f$ a
  $\Zk$-flow. Let $e=uv$ oriented from $u$ to $v$. Let $A^*$ denote the
  incidence matrix of $G$ with the column corresponding to $e$ removed. In both
  cases $(Af)_u = (A^*f)_u - f(e)$ and $(Af)_v = (A^*f)_v + f(e)$. So $f$ is a
  $\Zk$-flow if and only if $f(e)=(A^*f)_u$ and $f(e)=-(A^*f)_v$. In the first
  case these two values coincide because $(A^*f)_u+(A^*f)_v =
  (A_{G_{/e}}f')_{u'} =0$ where $u'$ is the vertex obtained by identifying $u$
  and $v$. In the second case, both of these values are zero, because $(A^*f)_u
  = (A_{G_{\setminus e}}f')_u=0$ and $(A^*f)_v = (A_{G_{\setminus e}}f')_v=0$.
\end{proof}

Now we turn to totally cyclic reorientations. Here the situation is a bit more
complicated compared to $\Zk$-flows. We start with a useful characterization of
totally cyclic orientations.

\begin{lem}
\label{lem:existence-of-dipath}
Let $G$ be an oriented graph. $\sigma$ is a totally cyclic reorientation of $G$
if and only if for any vertices $u,v\in V$ in the same component of the
underlying undirected graph, there exists a directed path in $_\sigma G$ from
$u$ to $v$.
\end{lem}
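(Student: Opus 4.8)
The plan is to prove both implications directly, using the definition of a totally cyclic reorientation as one in which every edge lies on a directed cycle.

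First I would handle the easier direction. Suppose $\sigma$ is a totally cyclic reorientation of $G$, so in $_\sigma G$ every edge lies on a directed cycle. Fix two vertices $u,v$ in the same component of the underlying undirected graph. Pick an undirected path $u = w_0, w_1, \dots, w_r = v$ in $G$; it suffices to show that for each consecutive pair $w_{i-1}, w_i$ joined by an edge $e$ of $G$, there is a directed path in $_\sigma G$ from $w_{i-1}$ to $w_i$ and also one from $w_i$ to $w_{i-1}$, since we can then concatenate. If in $_\sigma G$ the edge $e$ points from $w_{i-1}$ to $w_i$, that edge itself is a directed path in one direction; for the reverse direction, use that $e$ lies on a directed cycle $C$ in $_\sigma G$, and the portion of $C$ from $w_i$ back to $w_{i-1}$ (not using $e$) is a directed path from $w_i$ to $w_{i-1}$. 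The symmetric argument handles the case where $e$ points the other way. Thus between any two adjacent vertices we have directed paths in both directions, and concatenating along the undirected $u$–$v$ path gives a directed path from $u$ to $v$ in $_\sigma G$.

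For the converse, suppose that in $_\sigma G$ there is a directed path between any two vertices lying in a common component of the underlying graph. Let $e$ be any edge of $G$, say oriented in $_\sigma G$ from $x$ to $y$. Since $x$ and $y$ are adjacent, they lie in the same component, so there is a directed path $Q$ in $_\sigma G$ from $y$ to $x$. Then $e$ followed by $Q$ is a directed closed walk through $e$; extracting a directed cycle through $e$ from this closed walk (a closed directed walk through a fixed edge always contains a directed cycle through that edge, obtained by shortcutting at repeated vertices) shows that $e$ lies on a directed cycle in $_\sigma G$. As $e$ was arbitrary, $_\sigma G$ is totally cyclic.

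I do not anticipate a serious obstacle here; the statement is essentially a standard reformulation. The one point requiring a little care is the extraction of a genuine directed cycle through a prescribed edge from a directed closed walk through that edge — one must note that walks, not just paths, are produced, and argue that shortcutting at the first repeated vertex eventually yields a cycle still containing $e$ (or, more simply, take among all directed closed walks through $e$ one of minimum length and observe it must be a cycle). A symmetric remark applies to the concatenation step in the forward direction, where the "directed path from $w_i$ to $w_{i-1}$ along $C$" is literally a path since $C$ is a cycle. Everything else is bookkeeping.
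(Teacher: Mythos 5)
Your proof is correct and follows essentially the same route as the paper's: the forward direction uses that each edge of an undirected $u$--$v$ path lies on a directed cycle to obtain directed paths between its endpoints and then concatenates, and the converse closes a directed path from $y$ back to $x$ with the edge $e$ to form a directed cycle. Your extra care about extracting a cycle from a closed walk is harmless but not needed, since the returning path $Q$ already has distinct vertices and cannot contain $e$, so $e$ followed by $Q$ is itself a directed cycle.
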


\begin{proof}
  Suppose $\sigma$ is totally cyclic. As both $u$ and $v$ lie in the same
  component of the undirected graph, there is an undirected path $P$ from $u$ to
  $v$. As $_\sigma G$ is totally cyclic, every edge of $P$ lies on a directed
  cycle. In a directed cycle, there is a directed path from any vertex to any
  other vertex. So for any edge $u_iv_i$ in $P$ there is a directed path in $G$
  from $u_i$ to $v_i$. Concatenating all these paths, we obtain a directed walk
  in $G$ from $u$ to $v$, which in particular contains a directed path from $u$
  to $v$ as a subgraph.

  Conversely, suppose we can always find a directed path from any vertex to any
  other. Let $e$ be an edge oriented from $u$ to $v$. Then the assumption
  guarantees the existence of a path $P$ from $v$ to $u$. Concatenating $P$ and
  $e$ yields a directed cycle.
\end{proof}

In the following we use $\Delta$ to denote the symmetric difference of sets. So
given a reorientation $\sigma$ and an edge $e$, $\sigma\Delta e$ is the
reorientation obtained from $\sigma$ by reversing the edge $e$.

\begin{lem}
\label{lem:tco-projection}
Let $G=(V,E)$ be an oriented graph and $e\in E$ neither a loop nor a coloop. Let
$\sigma$ be a totally cyclic reorientation of $G$. Then
\begin{enumerate}
\item $\sigma\cap (E\setminus e)$ is a totally cyclic reorientation of $G_{/e}$,
  and
\item $\sigma\cap (E\setminus e)$ is a totally cyclic reorientation of
  $G_{\setminus e}$ if and only if both $\sigma$ and $\sigma \Delta e$ are
  totally cyclic reorientations of $G$.
\end{enumerate}
\end{lem}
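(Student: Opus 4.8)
The plan is to prove Lemma~\ref{lem:tco-projection} using the characterization from Lemma~\ref{lem:existence-of-dipath}, namely that a reorientation is totally cyclic exactly when every pair of vertices in a common undirected component can be joined by a directed path. Throughout, write $e = uv$ oriented from $u$ to $v$, and let $u'$ denote the vertex obtained by identifying $u$ and $v$ in $G_{/e}$. Note that $G_{/e}$, $G_{\setminus e}$ and $G$ all have the same underlying undirected component structure on $E \setminus e$ (since $e$ is not a coloop), so the vertex-connectivity bookkeeping is clean.

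For part (1), I would argue as follows. Given vertices $x, y$ in a common component of $(G_{/e})_{\text{underlying}}$, lift them to vertices of $G$ (using $u$ or $v$ as a representative of $u'$); since $\sigma$ is totally cyclic on $G$, Lemma~\ref{lem:existence-of-dipath} gives a directed path in $_\sigma G$ between the lifts. Projecting this path to $G_{/e}$ (contracting $e$ if it occurs, which can only shorten or identify vertices along the path) yields a directed walk, hence a directed path, in $_{\sigma \cap (E \setminus e)}(G_{/e})$ from $x$ to $y$. Applying Lemma~\ref{lem:existence-of-dipath} in the other direction finishes part (1).

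For part (2), the forward direction: assume $\sigma \cap (E \setminus e)$ is totally cyclic on $G_{\setminus e}$. Every edge $e' \ne e$ then lies on a directed cycle of $_{\sigma}(G_{\setminus e})$, which is also a directed cycle of $_\sigma G$ avoiding $e$; so $\sigma$ is totally cyclic provided $e$ itself lies on a directed cycle. But $_\sigma(G_{\setminus e})$ contains a directed path from $v$ to $u$ (its endpoints lie in a common component since $e$ is not a coloop), and appending the edge $e$ (oriented $u \to v$ in $_\sigma G$) closes a directed cycle through $e$; the same path with $e$ reversed shows $\sigma \Delta e$ is totally cyclic as well. Conversely, assume both $\sigma$ and $\sigma \Delta e$ are totally cyclic on $G$. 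I must show every $e' \ne e$ lies on a directed cycle of $_\sigma G$ that avoids $e$. Since $\sigma$ is totally cyclic, $e'$ lies on some directed cycle $C$ of $_\sigma G$; if $C$ avoids $e$ we are done. If $C$ uses $e$, then $C$ traverses $e$ from $u$ to $v$, so $C$ contains a directed path from $v$ to $u$ avoiding $e$. In $_{\sigma \Delta e}G$ the edge $e'$ also lies on a directed cycle $C'$, and $C'$ either avoids $e$ (done, since all other edges carry the same orientation in $\sigma$ and $\sigma \Delta e$) or traverses $e$ in the direction $v \to u$; but then $C'$ contains a directed path in $_\sigma G$ from $u$ to $v$ avoiding $e$. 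Splicing the $v \to u$ subpath from $C$ with the $u \to v$ subpath from $C'$ produces a closed directed walk in $_\sigma G$ through $e'$ avoiding $e$, hence a directed cycle through $e'$ in $_\sigma(G_{\setminus e})$. Together with the connectivity needed for the remaining trivial cases, Lemma~\ref{lem:existence-of-dipath} gives that $\sigma \cap (E \setminus e)$ is totally cyclic on $G_{\setminus e}$.

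The main obstacle is the converse of part (2): extracting from "both $\sigma$ and $\sigma \Delta e$ are totally cyclic on $G$" a directed cycle through a given $e'$ that physically avoids $e$. The splicing argument above is the crux — one must be careful that when $C$ and $C'$ both pass through $e$, they do so in opposite directions (forced, since one comes from $\sigma$ and the other from $\sigma \Delta e$), so that the $v\to u$ piece of $C$ and the $u \to v$ piece of $C'$ concatenate into a legitimate directed structure in $_\sigma G$, and that this structure contains $e'$. Handling the edge cases (when $e'$ happens to lie on the avoiding subpath of $C$ already, or degeneracies where the walk revisits vertices) is routine once the main splice is set up correctly.
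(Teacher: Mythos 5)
Your proof is correct and follows essentially the same route as the paper: both rest on the directed-path characterization of Lemma~\ref{lem:existence-of-dipath}, and in the converse of part (2) both extract the crucial $u$-to-$v$ detour avoiding $e$ from a directed cycle through $e$ in $_{\sigma\Delta e}G$. The only cosmetic differences are that the paper handles part (1) by contracting a covering family of directed cycles rather than projecting paths, and in the converse of (2) it substitutes the detour into arbitrary directed paths (re-verifying the path criterion) where you splice two cycle segments to exhibit, for each $e'\neq e$, a directed closed walk through $e'$ avoiding $e$; both variants are sound.
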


\begin{proof}
  \emph{(1)} As $_\sigma G$ is totally cyclic, there is a collection
  $\mathcal{C}$ of directed cycles in $_\sigma G$ that cover all edges. Then $\{
  C_{/e} | C\in\mathcal{C}\}$ is a collection of directed cycles in
  $_{\sigma\cap (E\setminus e)}G_{/e}$ that covers all edges in $G_{/e}$ and
  hence $_{\sigma\cap (E\setminus e)}G_{/e}$ is totally cyclic.

  \emph{(2)} Let $e=uv$. Suppose $_{\sigma\cap (E\setminus e)}G_{\setminus e}$
  is totally cyclic. Then by Lemma~\ref{lem:existence-of-dipath} there exist
  directed paths from $u$ to $v$ and from $v$ to $u$. These show that no matter
  which way we orient $e$, we can always find a directed cycle on which $e$ lies
  and so both $_\sigma G$ and $_{\sigma\Delta e} G$ are totally cyclic.

  Conversely, suppose both $_\sigma G$ and $_{\sigma\Delta e} G$ are totally
  cyclic. The edge $e$ lies on a directed cycle in $_{\sigma\Delta e} G$ so by
  Lemma~\ref{lem:existence-of-dipath} there is a directed path $P$ from $u$ to
  $v$ in $_\sigma G_{\setminus e}$.  Let $u',v'$ be any two vertices in
  $G_{\setminus e}$. As $_\sigma G$ is totally cyclic there is a directed path
  $P'$ in $_\sigma G$ from $u'$ to $v'$. We replace every occurrence of $e$ in
  $P'$ with $P$ and obtain a directed walk (and hence a directed path) in
  $_{\sigma\cap E\setminus e} G_{\setminus e}$ from $u'$ to $v'$. By
  Lemma~\ref{lem:existence-of-dipath} it follows that $_{\sigma\cap E\setminus
    e} G_{\setminus e}$ is totally cyclic.
\end{proof}

\begin{lem}
\label{lem:tco-lifting}
Let $G=(V,E)$ be an oriented graph and $e\in E$ neither a loop nor a coloop.
\begin{enumerate}
\item Let $\sigma\subseteq E\setminus e$ be a totally cyclic reorientation of
  $G_{/e}$. Then at least one of ${\sigma}$ and ${\sigma\cup e}$ is a totally
  cyclic reorientation of $G$.
\item Let $\sigma\subseteq E\setminus e$ be a totally cyclic reorientation of
  $G_{\setminus e}$. Then both ${\sigma}$ and $\sigma\cup e$ are totally cyclic
  reorientations of $G$.
\end{enumerate}
\end{lem}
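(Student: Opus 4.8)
The plan is to prove Lemma~\ref{lem:tco-lifting} using the path characterization of totally cyclic reorientations from Lemma~\ref{lem:existence-of-dipath}. Write $e = uv$ oriented from $u$ to $v$. The key observation in both parts is that in $G_{/e}$ the vertices $u$ and $v$ are identified to a single vertex $u'$, so a directed path in $_\sigma G_{/e}$ through $u'$ corresponds, back in $G$, to either a path that genuinely passes through the identified point or a pair of path-pieces that can be stitched together by inserting the edge $e$ (in one orientation or the other).

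For part (2), suppose $\sigma \subseteq E \setminus e$ is a totally cyclic reorientation of $G_{\setminus e}$. Since $G_{\setminus e}$ is a subgraph of both $_\sigma G$ and $_{\sigma \cup e} G$ on the same vertex set, and since deleting $e$ does not change which vertices lie in a common undirected component (as $e$ is not a coloop, $u$ and $v$ are already connected in $G_{\setminus e}$), Lemma~\ref{lem:existence-of-dipath} gives directed paths between every pair of vertices already inside $_\sigma G_{\setminus e}$. These same paths survive in $_\sigma G$ and in $_{\sigma \cup e} G$, so by Lemma~\ref{lem:existence-of-dipath} both orientations of $e$ extend $\sigma$ to a totally cyclic reorientation of $G$. (The edge $e$ itself automatically lies on a cycle because there is a directed path from its head back to its tail in $G_{\setminus e}$.)

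For part (1), the subtle case, suppose $\sigma$ is a totally cyclic reorientation of $G_{/e}$. By Lemma~\ref{lem:existence-of-dipath} applied in $G_{/e}$, there is a directed path in $_\sigma G_{/e}$ from $u'$ to $u'$ that is nontrivial only if we are careful — better: there are directed paths $P$ from $u'$ to any vertex and $Q$ from any vertex to $u'$. Lifting to $G$, a directed path from $u'$ to $u'$ in $G_{/e}$ that uses at least one edge becomes a directed walk in $G$ from one of $\{u,v\}$ to one of $\{u,v\}$. If it goes from $v$ to $u$ in $_\sigma G$, then inserting $e$ (oriented $u \to v$, i.e.\ $e \notin \sigma$) closes a directed cycle through $e$; if it goes from $u$ to $v$ in $_\sigma G$, then reorienting $e$ (so $e \in \sigma \cup e$) closes the cycle. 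The point is to show that for at least one of the two choices $\sigma$ or $\sigma \cup e$, \emph{every} edge of $G$ lies on a directed cycle.

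The main obstacle — and the heart of the argument — is ruling out the possibility that one choice of orientation for $e$ breaks the cyclicity of some \emph{other} edge while the other choice breaks cyclicity of $e$ itself. The clean way to handle this: in $_\sigma G_{/e}$ there is a directed closed walk covering all edges; lifting it to $G$ yields a directed walk that alternately visits $u$ and $v$ along edges of $E \setminus e$ and needs to "jump" between $u$ and $v$. Each jump is either $u \to v$ or $v \to u$. If some jump is $v \to u$, orient $e$ as $u \to v$ (take $\sigma$): every jump $v \to u$ is realized by traversing $e$, and every jump $u \to v$ — if any — must be re-examined. The correct dichotomy is: either all jumps required are $v\to u$ (take $e$ with its given orientation, $\sigma$ works) or all are $u \to v$ (take $\sigma \cup e$), because the existence of directed paths $u' \rightsquigarrow u'$ in both "senses" would, via Lemma~\ref{lem:existence-of-dipath}, already force $G_{/e}$-paths that let us build cycles in $G$ through $e$ in whichever orientation we like, hence both $\sigma$ and $\sigma\cup e$ would work. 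So the case analysis collapses: if a directed $v$-to-$u$ path exists in $_\sigma G_{\setminus e}$ then $\sigma$ is totally cyclic on $G$; otherwise, since $_\sigma G_{/e}$ is totally cyclic there must be a directed $u$-to-$v$ path, so $\sigma \cup e$ is totally cyclic on $G$. I would write this out as a short two-line case split once the path machinery is in place; the lifting-walk bookkeeping is the only place where care is genuinely needed.
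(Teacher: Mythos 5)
Your overall strategy --- the path characterization of Lemma~\ref{lem:existence-of-dipath} plus repairing lifted paths at the identified vertex by substituting either $e$ or a detour path --- is exactly the paper's, and your part (2) is correct and essentially identical to the paper's. The trouble is in part (1), at the one point where care is genuinely needed. First, a local error: with $e=uv$ oriented $u\to v$ under $\sigma$, traversing $e$ realizes a jump from $u$ to $v$, not from $v$ to $u$, so the sentence ``if some jump is $v \to u$, orient $e$ as $u \to v$ \dots every jump $v\to u$ is realized by traversing $e$'' has the directions reversed, and your ``correct dichotomy'' pairs each jump direction with the wrong orientation of $e$. Your final two-line version fixes the directions (a $v$-to-$u$ path in $_\sigma G_{\setminus e}$ together with $e$ kept as $u\to v$ does make $\sigma$ totally cyclic: $u\to v$ jumps are repaired by $e$, $v\to u$ jumps by the path), but it rests on the unproved assertion that if no directed $v$-to-$u$ path exists in $_\sigma G_{\setminus e}$, then a directed $u$-to-$v$ path must exist. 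That is not a direct consequence of $_\sigma G_{/e}$ being totally cyclic, and the parenthetical about paths $u'\rightsquigarrow u'$ ``in both senses'' does not supply it --- it addresses the case where jumps of both kinds occur, not the case where no directed path joins $u$ and $v$ in either direction.

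The missing step can be filled, and doing so recovers essentially the paper's organization of the proof. Take a collection of directed cycles of $_\sigma G_{/e}$ covering all edges. Each such cycle passes through the identified vertex $u'$ at most once, so its lift to $G$ is either an honest directed cycle of $_\sigma G_{\setminus e}$ or a ``broken'' cycle, i.e.\ a directed path in $_\sigma G_{\setminus e}$ from $v$ to $u$ or from $u$ to $v$. If some lift is broken, you obtain your directed path in one of the two directions and the repair argument goes through for the corresponding orientation of $e$. If no lift is broken, then $_\sigma G_{\setminus e}$ is itself totally cyclic and you are back in part (2), where both orientations of $e$ work; in particular Lemma~\ref{lem:existence-of-dipath} then produces the path you were missing (using that $e$ is not a coloop, so $u$ and $v$ lie in the same component of $G_{\setminus e}$). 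Without this trichotomy --- broken one way, broken the other way, or not broken at all --- your case split does not close.
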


\begin{proof}
  \emph{(1)} Let $_\sigma G_{/e}$ be totally cyclic and $e=uv$. Let
  $\mathcal{C}$ be a collection of directed cycles in $_\sigma G_{/e}$ that
  covers all edges of $G_{/e}$. Now we distinguish two cases: Is one of these
  cycles ``broken'' in $G$ or not? More precisely does there exist a cycle
  $C\in\mathcal{C}$ that contains consecutive edges $e_1$ and $e_2$ such that
  $e_1$ enters $u$ and $e_2$ leaves $v$ (or vice versa)?\footnote{We also
    require that $C$ does not consist of a single edge that is a loop.} If not,
  then $\mathcal{C}$ shows that $_\sigma G_{\setminus e}$ is also totally cyclic
  and we can continue as in part \emph{(2)} below.

  So we suppose that $C$ is such a broken cycle. In this case $C$ gives a
  directed path from $v$ to $u$ in $G$. We now orient $e$ from $u$ to $v$. Then
  any directed path $P$ in $G_{/e}$ from a vertex $u'$ to a vertex $v'$ can be
  turned into a directed path in $G$ from $u'$ to $v'$ by substituting the edge
  $e$ or the path given by $C$ wherever $P$ is broken. Using
  Lemma~\ref{lem:existence-of-dipath} the claim follows.

  \emph{(2)} Already in $G_{\setminus e}$ there is, for any two vertices $u,v$
  in the same component, a directed path from $u$ to $v$. This remains true
  after the edge $e$ is inserted, no matter how $e$ is oriented (note that $e$
  is not a coloop). So by Lemma~\ref{lem:existence-of-dipath} both ${\sigma}$
  and $\sigma\cup e$ are totally cyclic reorientations of $G$.
\end{proof}

Now we have all ingredients to show that $\rmFlow_G(k)$ is a
Tutte--Grothendieck invariant.

\begin{proof}[Proof of Theorem~\ref{thm:rmFlowRecursion}]
  \emph{1.}  If $E=\emptyset$, then $\rmFlowSet_G(k)=\{(\emptyset,\emptyset)\}$.

  \emph{2.}  If $e\in E$ is a coloop, then any flow $f$ on $G$ has
  $f(e)=0$. Thus $e$ is also a coloop in $G/\supp(f)$ which means that there
  is no totally cyclic orientation on $G/\supp(f)$. So
  $\rmFlowSet_G(k)=\emptyset$.

  \emph{3.} If $e\in E$ is a loop, then $(f,\sigma) \mapsto (f|_{E\setminus e},
  \sigma\cap E\setminus e)$ is a surjective map from $\rmFlowSet_G(k)$ onto
  $\rmFlowSet_{G\setminus e}$ and every fiber of this map has cardinality
  $k+1$. The reason is that given $(f|_{E\setminus e}, \sigma\cap E\setminus e)$
  we can define $f(e)\in\Zk$ arbitrarily and $f$ will become a $\Zk$-flow on
  $G$. The case $f(e)=0$ is counted twice as either orientation of $e$ will turn
  $\sigma$ into a totally cyclic orientation of $G_{/\supp(f)}$.

  \emph{4.} Let $e\in E$ be neither a coloop nor a loop. Consider the map
  $\pi_{G/e}:\rmFlowSet_G(k)\rar \rmFlowSet_{G/e}(k)$ given by
  $(f,\sigma)\mapsto (f|_{E\setminus e},\sigma\cap E\setminus
  e)$. Lemmas~\ref{lem:flow-projection} and \ref{lem:tco-projection} tell us
  that $\pi_{G/e}$ is well-defined and Lemmas~\ref{lem:flow-lifting} and
  \ref{lem:tco-projection} tell us that every $(f',\sigma')\in
  \rmFlowSet_{G/e}(k)$ has either one or two pre-images under
  $\pi_{G/e}$. $(f',\sigma')$ has two pre-images if and only if the unique
  $\Zk$-flow $f$ with $f|_{E\setminus e}=f'$ has $f(e)=0$ and both $\sigma'$ and
  $\sigma'\cup e$ are totally cyclic reorientations of $G_{/supp(f)}$.

  Loosely speaking, this means that the cardinalities of $\rmFlowSet_G(k)$ and
  $\rmFlowSet_{G/e}(k)$ are the same, except that we have to count those
  $(f',\sigma')\in \rmFlowSet_{G/e}(k)$ that have two pre-images twice.

  So let $\rmFlowSet'_{G}(k)$ denote the set of all $(f,\sigma)\in \rmFlowSet_G(k)$ such
  that $f(e)=0$ and both $\sigma$ and $\sigma\Delta e$ are totally
  cyclic reorientations on $G_{/\supp f}$. Consider the map
  $\pi_{G\setminus e}:\rmFlowSet'_G(k) \rar \rmFlowSet_{G\setminus e}(k)$ given by
  $(f,\sigma)\mapsto (f|_{E\setminus e},\sigma\cap E\setminus
  e)$. Lemmas~\ref{lem:flow-projection} and \ref{lem:tco-projection}
  tell us that $\pi_{G \setminus e}$ is well-defined and
  Lemmas~\ref{lem:flow-lifting} and \ref{lem:tco-projection} tell us
  that every $(f',\sigma')\in F_{G\setminus e}(k)$ has precisely two
  pre-images under $\pi_{G\setminus e}$. But this means that
  $\rmFlow_G(k)=\rmFlow_{G/e}(k) + \rmFlow_{G\setminus e}(k)$ as desired.
\end{proof}

\section{Combinatorial Proof of the Tutte Interpretation}
\label{sec:CombTutteProof}

In this section we give a combinatorial proof of
Theorem~\ref{thm:TutteInterpretation}, our interpretation of
$t_G(1+\l,1+k)$. The approach is similar to that in
Appendix~\ref{sec:comb-proof-for-flows}: we show that the counting function,
that we claim is identical to the Tutte polynomial, is a Tutte--Grothendieck
invariant with the appropriate structure constants. Surprisingly, the
combinatorial proof of Theorem~\ref{thm:TutteInterpretation} is much simpler
than the combinatorial proof of Theorem~\ref{thm:MainFlow}.

Theorem~\ref{thm:TutteInterpretation} states that $t_G(1+\l,1+k)$ counts the
number of triples $(f,t,\sigma)$ where $f$ and $t$ are, respectively, a
$\Z_k$-flow and a $\Z_l$-tension on $G$ with disjoint support and
$\sigma\subseteq E\setminus\supp(f)\cup\supp(t)$. Now, for any edge set
$S\subseteq E$, the $\Z_k$-flows $f$ on $G$ with $f(e)=0$ for all $e\in S$ are
in bijection with the $\Z_k$-flows on $G_{\setminus S}$. Correspondingly, for
any edge set $S\subseteq E$, the $\Z_l$-tensions $t$ on $G$ with $t(e)=0$ for all
$e\in S$ are in bijection with the $\Z_l$-tensions on $G_{/ S}$. So if we define
the sets $\tcs_G(\l,k)$ by
\begin{eqnarray*}
\tcs_G(\l,k) = \{ (S,t,f) &:& 
S\subseteq E, \\ &&
\text{$t$ a $\Z_\l$-tension on $G/S$}, \\ &&
\text{$f$ a $\Z_k$-flow on $G[S]$}\},
\end{eqnarray*}
for $k,\l\in\N$, Theorem~\ref{thm:TutteInterpretation} then becomes
$t_G(1+\l,1+k)=\tcf_G(\l,k)$ for all $\l,k\geq 1$. By the fact that the Tutte
polynomial is a Tutte--Grothendieck invariant, all we have to show is the
following:
\begin{enumerate}
\item If $E=\emptyset$, then $\tcf_G(\l,k)=1$.
\item If $e\in E$ is a coloop, then $\tcf_G(\l,k)=(1+\l)\cdot \tcf_{G/e}(\l,k)$.
\item If $e\in E$ is a loop, then $\tcf_G(\l,k)=(1+k)\cdot \tcf_{G\setminus
    e}(\l,k)$.
\item If $e\in E$ is neither a loop nor a coloop, then
  $\tcf_G(\l,k)=\tcf_{G/e}(\l,k)+ \tcf_{G\setminus e}(\l,k)$.
\end{enumerate}
For any statement $A$ we will denote by $[A]$ the number $1$ if $A$ holds and
$0$ if $A$ does not hold. Using this shorthand notation and the fact that if $e$ is a
loop or a coloop then $t_{G_{\setminus e}}=t_{G_{/e}}$, we can write what we
have to show more compactly as
\begin{eqnarray}
\label{eqn:TutteRecursion}
\tcf_G(\l,k)=\l^{[\text{$e$ is a coloop}]}\tcf_{G\setminus e}(\l,k)+k^{[\text{$e$ is a loop}]}\tcf_{G/ e}(\l,k).
\end{eqnarray}

Before we show that this identity holds, we work out how the
$\Z_l$-tensions on $G$ and on $G_{\setminus e}$ are related, just as we did in
Appendix~\ref{sec:comb-proof-for-flows} for $\Zk$-flows.

Given a map $f:E\rar\Zk$ and a set $S\subseteq E$ we define $f|_{G_{/S}}$ and
$f|_{G_{\setminus S}}$ to be the maps obtained by restricting $f$ to the
respective edge sets of $G_{/S}$ and $G_{\setminus S}$. A fiber of a map $f$ is
the set $f^{-1}(x)$ for any $x$ in the image.

\begin{lem}
\label{lem:fiber-tension}
If $t$ is a $\Z_\l$-tension on $G$, then $t|_{G\setminus e}$ is a
$\Z_\l$-tension on $G\setminus e$. If $e$ is a coloop, every fiber of the map
$t\mapsto t|_{G\setminus e}$ has cardinality $k$. Otherwise every fiber of the
map $t\mapsto t|_{G\setminus e}$ has cardinality $1$.
\end{lem}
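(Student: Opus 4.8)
The plan is to reduce the statement to the elementary fact that a $\Zl$-tension on $G$ is determined, given its restriction to $G_{\setminus e}$, by its value on $e$, together with a count of which values of $t(e)$ actually yield a tension. First I would observe that restricting a tension is compatible with deleting an edge: if $t$ is a $\Zl$-tension on $G$ and $C \subseteq E \setminus e$ is an undirected cycle of $G_{\setminus e}$, then $C$ is also an undirected cycle of $G$, and $\sprod{C}{t}$ involves only edges of $C$, so it vanishes; hence $t|_{G\setminus e}$ is a $\Zl$-tension on $G_{\setminus e}$. This gives the map $t \mapsto t|_{G\setminus e}$ and shows it is well-defined.

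Next I would analyze the fibers. Fix a $\Zl$-tension $t'$ on $G_{\setminus e}$; I want to count the $\Zl$-tensions $t$ on $G$ with $t|_{G\setminus e} = t'$, i.e.\ the values $x \in \Zl$ we may assign to $t(e)$ so that $\sprod{C}{t} = 0$ for every undirected cycle $C$ of $G$. Cycles of $G$ not using $e$ impose no new condition since $t'$ is already a tension. For cycles through $e$ I would split into two cases according to whether $e$ is a coloop. If $e$ is a coloop, then no undirected cycle of $G$ contains $e$, so \emph{every} choice $x \in \Zl$ works; the fiber has cardinality $\l$ (the lemma as stated writes $k$, which should read $\l$). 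If $e$ is not a coloop, pick one undirected cycle $K$ of $G$ through $e$; writing $t(e) = x$, the equation $\sprod{K}{t} = 0$ becomes $\pm x + (\text{sum of } t' \text{ over the other edges of } K) = 0$, which pins down $x$ uniquely in $\Zl$. I then need that this single value of $x$ simultaneously satisfies $\sprod{C}{t} = 0$ for \emph{all} cycles $C$ through $e$: this follows because any such $C$ can be written (in the cycle space over $\Zl$, or directly by the ``cycle basis'' discussion of Section~\ref{sec:FlowsInsideOut}) as a $\Zl$-combination of $K$ and cycles avoiding $e$, so the tension condition on $C$ is a consequence of those already verified. Hence the fiber has cardinality $1$.

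The main obstacle is the last point of the previous paragraph: making precise that verifying $\sprod{K}{t}=0$ on a single cycle $K$ through $e$ plus $\sprod{C}{t}=0$ on all cycles avoiding $e$ forces $\sprod{C}{t}=0$ on \emph{every} cycle through $e$. The clean way is to invoke the cycle basis $\oA$ built from a spanning tree $T$ with $e \in \Tc$ (as in Section~\ref{sec:FlowsInsideOut}): the elementary cycles indexed by $\Tc \setminus e$ avoid $e$, the elementary cycle indexed by $e$ is our $K$, and since every undirected cycle is an integral superposition of this basis and the tension pairing $\sprod{\cdot}{t}$ is linear in the cycle, the vanishing on the basis implies vanishing everywhere. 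With this, the fiber cardinalities are as claimed and the lemma follows; the $k$ in the displayed statement is a typo for $\l$.
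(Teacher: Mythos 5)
Your proof is correct and follows essentially the same route as the paper: restriction is well-defined because cycles of $G_{\setminus e}$ are cycles of $G$; a coloop lies on no cycle, so all $\l$ values of $t(e)$ work; otherwise one cycle through $e$ pins down $t(e)$ uniquely. Your cycle-basis argument merely makes explicit the step the paper states tersely (that the value of $t(e)$ is independent of the chosen cycle because the remaining constraints come from cycles avoiding $e$), and you are right that the ``cardinality $k$'' in the statement is a typo for $\l$.
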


\begin{proof}

  A cycle in $G_{\setminus e}$ is also a cycle in $G$. If $\sprod{C}{t}=0$ holds
  for every cycle $C$ of $G$, then it also holds for every cycle of
  $G_{\setminus e}$. So $t|_{G_{\setminus e}}$ is a $\Zk$-tension on
  $G_{\setminus e}$.

Now suppose $e$ is not a coloop in $G$. Let $t'$ be a $\Zk$-tension on
$G_{\setminus e}$. Which $\Z_\l$-tensions $t$ on $G$ have $t|_{G_{\setminus
    e}}=t'$? Necessarily, $t(e'):=t'(e')$ for all $e'\not=e$. All we have to
show is that there is a unique choice of $t(e)$ such that $t$ is a tension. Now
as $e$ is not a coloop, $e$ lies on a cycle $C$. The weights of all other edges
on $C$ are fixed. As $\Zk$ is a group, there is a unique choice of $t(e)$ such
that $\sprod{C}{t}=0$. $t(e)$ does not depend on the choice of $C$, as
$\sprod{C'}{t}=\sprod{C'}{t'}=0$ for all cycles $C'$ that do not contain $e$.

If $e$ is a coloop, then $e$ does not lie on any cycle and so we can choose
$t(e)\in\Zk$ arbitrarily.
\end{proof}

Now the proof of our interpretation of the Tutte polynomial is easy.

\begin{proof}[Proof of Theorem~\ref{thm:TutteInterpretation}]
  We have to show that (\ref{eqn:TutteRecursion}) holds. To that end we define a
  map
\begin{eqnarray*}
\tcs_G(\l,k) & \rar & \tcs_{G\setminus e}(\l,k)\uplus\tcs_{G/ e}(\l,k)\\
(S,t,f) & \mapsto & \left\{
\begin{array}{lcr}
(S,t|_{G\setminus e},f) & \text{if} & e\not\in S, \\
(S\setminus e,t,f|_{G/e}) & \text{if} & e\in S.
\end{array}\right.
\end{eqnarray*}
By Lemma~\ref{lem:fiber-tension} a fiber over $\tcs_{G\setminus e}(\l,k)$ has
cardinality $\l$ if $e$ is a coloop and cardinality 1 otherwise. As we have seen
in Appendix~\ref{sec:comb-proof-for-flows}, a fiber over $\tcs_{G/ e}(\l,k)$ has
cardinality $k$ if $e$ is a loop and cardinality 1 otherwise. Thus
\[
   \tcf_G(\l,k)=\l^{[\text{$e$ is a coloop}]}\tcf_{G\setminus e}(\l,k)+k^{[\text{$e$ is a loop}]}\tcf_{G/ e}(\l,k)
\]
for any $e\in E$. 
\end{proof}

\bibliographystyle{siam}
\bibliography{ModularFlows}

\end{document}